\newtheorem{thm}{Theorem}[section]
\newtheorem{prop}[thm]{Proposition}
\newtheorem{lem}[thm]{Lemma}
\theoremstyle{remark}
\newtheorem{rem}[thm]{Remark}
\newtheorem{ex}[thm]{Example}
\theoremstyle{definition}
\newtheorem{dfn}[thm]{Definition}
\numberwithin{equation}{section}
\numberwithin{thm}{section}
\newcommand{\D}{\mathbb{D}}
\newcommand{\C}{\mathbb{C}}
\newcommand{\Ps}{\mathbb{CP}}
\newcommand{\sO}{\mathcal{O}}
\newcommand{\Hom}[1]{{\bf #1}}
\newcommand{\dbar}{\overline{\partial}}
\newcommand{\edge}{\sigma(j_1, \ldots, j_k)}
\newcommand{\e}{\epsilon}
\newcommand{\Leray}{\mathscr{L}(\Hom{z}, \boldsymbol{\tau})}
\newcommand{\pair}[1]{\langle \boldsymbol{\tau}, \Hom{#1} \rangle}
\newcommand{\weak}{W_{\Hom{z}}{(\Omega)}}
\begin{document}



\subjclass{}



\title{Projectively Invariant Hardy Spaces on Domains with Corners}


%

\author{Benjamin Krakoff}
\address{
}

\email{bkrakoff@umich.edu}


%

\thanks{}

%

\begin{abstract}
For smoothly bounded, strongly $\C$-convex domains, one can use the Fefferman form or its variants to define projectively invariant norms on sections of holomorphic line bundles, producing a Hardy space. In two variables, we construct a projectively invariant measure on the singular part of a piece-wise smooth domain, and show that positivity of this invariant coincides with a notion of strong $\C$-convexity that is compatible with Cauchy-Fantappi\'e-Leray kernels, and thus define projectively invariant Hardy spaces as in the smooth case.
\end{abstract}

\maketitle



\section{Introduction}
\par Given a domain $\Omega \subset \Ps^n$, $\Omega$ is \emph{$\C$-linearly convex} if the complement of $\Omega$ is a union of complex hyperplanes. If $\Omega$ has $C^2$-boundary $S$, this is equivalent to the complex tangent hyperplanes lying to one side of $\Omega$, in which case the second fundamental form is positive semi-definite on the complex tangent hyperplane and $\Omega$ is also $\C$-convex. If additionally these complex tangent planes intersect $S$ precisely at the point of tangency, we say $\Omega$ (or $S$) is \emph{strictly} $\C$-convex. Finally, if the second fundamental form is strictly positive definite on the maximal complex subspace, then we say $\Omega$ (or $S$) is \emph{strongly} $\C$-convex.
\par For strongly $\C$-convex $\Omega$, it is known how to construct projectively invariant Hardy spaces that satisfy many of the properties familiar from the one variable theory. Using the Fefferman form, or Barrett's preferred measure, one can construct projectively invariant norms on sections of line bundles over $S$, and thus define invariant $L^2$ and Hardy spaces. For holomorphic sections, one can use Cauchy-Fantappi\'e-Leray integrals to reconstruct values on $\Omega$ from the values on the boundary $S$. In this context, these Cauchy-Fantappi\'e-Leray kernels are simply the Leray transform, and one can make sense of the corresponding singular integrals to turn the Hardy spaces into a reproducing kernel Hilbert space.
\par Our goal in this paper is to define projectively invariant Hardy spaces on a pseudoconvex domain $\Omega$ with piece-wise smooth boundary $S$ in 2 variables. To achieve this, we first define an appropriate notion of strict $\C$-convexity in $n$ variables defined in terms of a generalization of complex tangents, and use Cauchy-Fantappi\'e-Leray integrals to produce a holomorphic reproducing kernel. This reproducing kernel is the standard Leray kernel on the smooth part of $S$ and gives positive mass to the singular part of $S$. Next restricting our attention to 2 variables, we define an appropriate bundle-valued invariant which is non-negative when the domain is $\C$-convex, and when this invariant is positive we show $\Omega$ satisfies some local notion of strong $\C$-convexity. Lastly, we use this invariant to produce a norm and a corresponding Hardy space $H^2(\Omega)$ on $S$ and show that, via the reproducing kernel, one can recover the values of a holomorphic section $f$ via its corresponding element in $H^2(\Omega)$.

\section{Projective Transformations and Line Bundles}
\subsection{The Bundles $O_E(j,k)$ and $\sO_E(j,0)$} \hfill
\par We closely follow the expositions in \cite{barrett_2015} section 2 and \cite{APS} section 3.2. Let $\Ps^n$ have projective coordinates $\Hom{z} = [z_0:\ldots:z_n]$. $z = (\frac{z_1}{z_0}, \ldots, \frac{z_n}{z_0})$ be standard affine coordinates, setting $\hat{z}_j = \frac{z_j}{z_0}, \ j \neq 0$.
\par Let $T: \Ps^n \rightarrow \Ps^n$ be a projective transformation. Choose an invertible matrix $(M_{i,j})_{i,j=0}^n$ with $\det(M)=1$ which descends to $T$. Any two choices of $M$ differ by a $n+1$ root of unity, and for the formulas we care about the ambiguity will wash out. In the standard affine coordinates,
\[T(\hat{z}_1, \ldots, \hat{z}_n) = \frac{(M_{1,0}+\sum_{j=1}^n M_{1,j}\hat{z}_j, \ldots, M_{n,0} + \sum_{j=1}^n M_{n,j}\hat{z}_j)}{M_{0,0} + \sum_{j=1}^n M_{0,j}\hat{z}_j}.\]
 Given a set $E \subset \Ps^n$ and integers $j,k$, let $O_E(j,k)$ denote the space of continuous functions on the cone over $E$ satisfying
 \[F(\lambda z_0, \ldots, \lambda z_n) = \lambda^{j}\overline{\lambda^k} F(z_0, \ldots z_n).\]
In the standard affine coordinates, we can identify $F$ with the continuous function $f$ on $E$, via $f(\hat{z}_1, \ldots, \hat{z}_n) = F(1, \hat{z}_1, \ldots, \hat{z}_n)$ and obtain the following transformation law
 \[ (T^*f)(z) = (M_{0,0} + \sum_{i=1}^n M_{0,i}\hat{z}_i)^j \overline{(M_{0,0} + \sum_{i=1}^n M_{0,i}\hat{z}_i)^k} f(T(z)). \]\[\]
 \par If $E$ is open, $\sO_E(j,0)$ will denote the space of holomorphic sections on $E$, and if $E$ is closed $\sO_E(j,0)$ will denote the space of sections holomorphic in some neighborhood of $E$. With this convention, expressions like $z_j$ can be thought of as sections of $\sO_{\Ps^n}(1, 0)$. Given $F \in O_{E}(j,k)$, we have well-defined expressions $\overline{F} \in O_E(k,j), |F|^2 \in O_{E}(j+k,j+k)$, etc. More generally, if $G \in O_{E}(j', k')$ then $FG \in O_E(j,k) \otimes O_E(j',k') = O_E(j+j', k+k')$. When $j = k$, one can make sense of the section $F$ being positive.  Observe that a section of $O_E(0,0)$ is nothing but a continuous function on $E$.
 \par We will need the notion of a projectively invariant, bundle-valued form. For example, the differential form $dz := d\hat{z}_1 \wedge \ldots \wedge d\hat{z}_n$ satisfies the transformation law
 \[ T^*dz = (M_{0,0} + \sum_{j=1}^n M_{0,j}\hat{z}_j)^{-n-1} dz \]
 Thus, the expression $z_0^{n+1} dz$ is a projectively invariant $\sO_{\Ps^n}(n+1,0)$-valued $n$-form. More generally, an $L$-valued $k$-form on $M$ is a section of $L \otimes \bigwedge^k T^*(M)$. Pulling back a section means writing as a product of a section $s$ of $L$ and $k$-form $\omega$ and pulling back each.
 \subsection{Duality and The Universal Cauchy-Fantappi\'e-Leray Form} \hfill
 \par We give a brief overview of duality theory for $\C$-convex domains, since it is required for constructing the desired reproducing kernels in section \ref{sec:LerayKernel}. See \cite{APS} Chapter 3 for more details of the theory.
 \par The dual projective space $(\Ps^n)^*$ denotes the set of complex hyperplanes in $\Ps^n$. We may give $(\Ps^n)^*$ homogeneous coordinates $\Hom{w} = [w_0: \ldots : w_n]$ by identifying $\Hom{w}$ with the hyperplane $\ell_{\Hom{w}} := \{\Hom{z} | \Hom{w} \cdot \Hom{z} = 0\}$. We may also use the notation $\langle \Hom{w}, \Hom{z} \rangle$ to indicate dot product, depending on which is easiest to read in context. The \emph{incidence locus} $I \subset \Ps^n \times (\Ps^n)^*$ is the set of points incident to hyperplanes.
 \[ I := \{(\Hom{z}, \Hom{w}) | \Hom{z} \cdot \Hom{w} = 0\} \]
 Given a projective transformation $T$, there is a unique dual transformation $T^{-t}$ preserving $I$. If $T$ is given by the matrix $M$, then $T^{-t}$ is given by the matrix $M^{-t} := {}^t M^{-1}$, explaining the notation. For $\Omega$ $\C$-convex with (not necessarily smooth) boundary $S$, denote by $S^*$ the set of hyperplanes $\Hom{w}$ incident to $S$ not intersecting $S^*$. When $S$ is $C^2$ and strongly $\C$-convex, this is the dual hypersurface studied in \cite{barrett_2015}. Let $I_S$ be the set of pointed hyperplanes incident to $S$ but avoiding $\Omega$,
 \[ I_S := I \cap (S \times S^*) \]
 \par On $I$, the \emph{Universal Cauchy-Fantappi\'e-Leray form} is the unique (up to constants) projectively invariant $\sO_{\Ps^n}(n,0) \otimes \sO_{(\Ps^n)^*}(n, 0)$-valued $2n-1$ form. On the coordinate patch $U_{j,k} := \{z_j \neq 0, w_j \neq 0\}$, it is given by the expression
 \[ \omega_{CFL} = \frac{z_j^n w_k^n}{(2\pi i)^n} \left( \sum_{l=0}^n \frac{z_l}{z_j} d \left( \frac{w_l}{w_k}\right) \right) \wedge \left(\sum_{l=0}^{n} d \left( \frac{z_l}{z_j} \right) \wedge d\left( \frac{w_l}{w_k}\right) \right)^{n-1} \]
Keeping in mind the relation $\Hom{z}\cdot \Hom{w} = 0$, it's not hard to see that the Universal CFL form has the symmetry property $\omega_{CFL}(\Hom{z}, \Hom{w}) = (-1)^n \omega_{CFL}(\Hom{w}, \Hom{z})$. This form, along with the following theorem, gives a recipe for constructing holomorphic reproducing kernels on $\Omega$. 
 \begin{thm}(\cite{APS}, Thm. 3.1.7, \cite{AY} Lemma 3.3 and Corollary 3.6) \hfill \\
 \label{thm:APS}
 Let $\Omega$ be $\C$-convex such that $S$ and $S^*$ are smooth cycles. For $\boldsymbol{\tau} \in \Ps^n$, set $g_{\boldsymbol{\tau}}(\Hom{w}) := \frac{1}{\langle \boldsymbol{\tau}, \Hom{w}\rangle^n}$. Then for $f \in \sO_{\overline{\Omega}}(-n, 0), \ \boldsymbol{\tau} \in \Omega$
 \[  f(\boldsymbol{\tau}) = \int_{I_S} f \ g_{\boldsymbol{\tau}} \ \omega_{CFL} \]
 \end{thm}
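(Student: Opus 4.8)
The plan is to exploit the fact that, once the bundle weights are accounted for, the integrand is a holomorphic top-degree form on the incidence locus, and then to reproduce $f(\boldsymbol{\tau})$ by a dilation/homology argument with the constant pinned down on a model domain. First I would verify that $f\,g_{\boldsymbol{\tau}}\,\omega_{CFL}$ is an honest $(2n-1)$-form. Since $g_{\boldsymbol{\tau}}(\lambda\Hom{w}) = \lambda^{-n}g_{\boldsymbol{\tau}}(\Hom{w})$ we have $g_{\boldsymbol{\tau}} \in O_{(\Ps^n)^*}(-n,0)$, while $f \in \sO_{\overline{\Omega}}(-n,0)$ and $\omega_{CFL}$ is $\sO_{\Ps^n}(n,0)\otimes\sO_{(\Ps^n)^*}(n,0)$-valued, so the weights cancel in both factors and the product descends to a genuine $(2n-1)$-form. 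The crucial observation is that, away from the polar set $P_{\boldsymbol{\tau}} := \{(\Hom{z},\Hom{w}) \in I : \pair{w} = 0\}$, this form is holomorphic of type $(2n-1,0)$ on the smooth complex $(2n-1)$-dimensional manifold $I$. A holomorphic top $(p,0)$-form is automatically $d$-closed, so $d(f\,g_{\boldsymbol{\tau}}\,\omega_{CFL}) = 0$ on $I \setminus P_{\boldsymbol{\tau}}$. Because $\boldsymbol{\tau} \in \Omega$, no supporting hyperplane $\Hom{w} \in S^*$ passes through $\boldsymbol{\tau}$, so $I_S \subset I \setminus P_{\boldsymbol{\tau}}$ and we are integrating a closed form over a cycle that avoids the singularity.

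Next I would localize near $\boldsymbol{\tau}$. Working in affine coordinates centered so that $\boldsymbol{\tau}$ is the origin, the dilations $\delta_\epsilon : z \mapsto \epsilon z$ are projective transformations fixing $\boldsymbol{\tau}$, under which $\omega_{CFL}$ is invariant and $g_{\boldsymbol{\tau}}$ transforms by its weight; they carry $\Omega$ to a $\C$-convex domain $\Omega_\epsilon := \delta_\epsilon(\Omega)$ with smooth dual, and $I_S$ to $I_{S_\epsilon}$, all still containing $\boldsymbol{\tau}$. For $\epsilon \in [\epsilon_0,1]$ the family sweeps out a $2n$-chain in $I \setminus P_{\boldsymbol{\tau}}$ whose boundary is $I_S - I_{S_{\epsilon_0}}$, so by Stokes and the closedness above the integral over $I_S$ equals that over $I_{S_\epsilon}$ for every $\epsilon$. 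The invariance also makes the total variation $\int_{I_{S_\epsilon}} |g_{\boldsymbol{\tau}}\,\omega_{CFL}|$ independent of $\epsilon$ and finite, so since $S_\epsilon$ collapses to $\boldsymbol{\tau}$ and $f$ is continuous, replacing $f$ by $f(\boldsymbol{\tau})$ costs an error $O(\epsilon)$. Letting $\epsilon \to 0$ yields
\[ \int_{I_S} f\,g_{\boldsymbol{\tau}}\,\omega_{CFL} = \lim_{\epsilon\to 0}\int_{I_{S_\epsilon}} f\,g_{\boldsymbol{\tau}}\,\omega_{CFL} = c\cdot f(\boldsymbol{\tau}), \qquad c := \int_{I_S} g_{\boldsymbol{\tau}}\,\omega_{CFL}. \]

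It then remains to show $c = 1$. Here the homology argument applies with no constraint coming from $f$, since $g_{\boldsymbol{\tau}}\,\omega_{CFL}$ is closed on all of $I \setminus P_{\boldsymbol{\tau}}$ and the family of $\C$-convex domains with smooth dual containing $\boldsymbol{\tau}$ is connected; hence $c$ is a single universal constant, the same for every such $\Omega$. I would therefore compute $c$ on the round ball with $\boldsymbol{\tau}$ at its center, where the dual (Gauss) map $\zeta \mapsto \Hom{w}$ and hence $\omega_{CFL}|_{I_S}$ become fully explicit; the resulting integral matches the classical Bochner--Martinelli/Cauchy--Fantappi\'e normalization and evaluates to $1$ precisely because of the factor $(2\pi i)^{-n}$ built into $\omega_{CFL}$.

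I expect two points to require the most care. The first is making the dilation homology rigorous: one must check that each $S_\epsilon^*$ stays smooth and disjoint from $P_{\boldsymbol{\tau}}$ and that $\bigcup_\epsilon I_{S_\epsilon}$ is an admissible chain to which Stokes applies on the noncompact manifold $I \setminus P_{\boldsymbol{\tau}}$ — this is exactly where the hypothesis that $S$ and $S^*$ are smooth cycles is used. The second, and the main obstacle, is the explicit model computation of $c$: identifying $\omega_{CFL}$ restricted to the dual cycle of the ball with the standard Leray kernel $\frac{(n-1)!}{(2\pi i)^n}\,\frac{\cdots}{\pair{w}^n}$ and carrying out the residue calculation. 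This is a direct but bookkeeping-heavy computation once the dual map for the ball is written down.
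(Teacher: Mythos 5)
The paper does not actually prove Theorem \ref{thm:APS}: it is quoted from \cite{APS} (Thm.\ 3.1.7) and \cite{AY}, where the argument runs through the Cauchy--Fantappi\'e/Bochner--Martinelli formalism carried out separately on each fixed domain. Your preliminary observations are correct and consistent with that route: the bundle weights cancel so that $f\,g_{\boldsymbol{\tau}}\,\omega_{CFL}$ is an honest $(2n-1)$-form, it is a holomorphic form of top degree on the complex $(2n-1)$-dimensional manifold $I$ away from $\{\langle\boldsymbol{\tau},\Hom{w}\rangle=0\}$ and hence closed there, and $I_S$ avoids that polar set because every $\Hom{w}\in S^*$ misses $\Omega\ni\boldsymbol{\tau}$. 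The problems are in your two deformation steps.

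First, the dilation sweep is unjustified. Stokes' theorem requires the integrand to be defined and closed on a neighborhood of the $2n$-chain $\bigcup_{\epsilon\in[\epsilon_0,1]}I_{S_\epsilon}$, and the integrand contains $f$, which is holomorphic only near $\overline{\Omega}$. The $\Hom{z}$-projection of your chain is $\bigcup_{\epsilon}\delta_\epsilon(S)$, and the assertion that this stays inside $\overline{\Omega}$ is precisely star-shapedness of $\overline{\Omega}$ about $\boldsymbol{\tau}$. That is not a hypothesis, is not projectively natural, and does not follow from $\C$-convexity, which controls only connectivity and simple connectivity of intersections with complex lines; a simply connected line section need not be star-shaped about $\boldsymbol{\tau}$. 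So the key identity $\int_{I_S}=\int_{I_{S_\epsilon}}$ has no proof as written. Second, even granting the shrinking step, the constant $c$ is a priori a function of $\Omega$; your claim that it is universal rests on connectedness of the family of $\C$-convex domains with smooth duals containing $\boldsymbol{\tau}$, which you assert but do not prove, and which has no obvious proof (Minkowski sums and convex combinations of defining functions do not preserve $\C$-convexity, so there is no easy path from $\Omega$ to the ball). There is also a bookkeeping slip: $g_{\boldsymbol{\tau}}\,\omega_{CFL}$ alone is $\sO_{\Ps^n}(n,0)$-valued in $\Hom{z}$, so $c:=\int_{I_S}g_{\boldsymbol{\tau}}\,\omega_{CFL}$ is not a number until you insert the section $z_0^{-n}$; once you do, the form is closed only off $\{z_0=0\}$ as well, so all chains must also stay within the affine chart. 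The cited proof avoids all of these issues by working domain-by-domain: normalize the Leray section so that $\langle\Hom{w}(\Hom{z}),\Hom{z}-\boldsymbol{\tau}\rangle=1$ and homotope it, in the $\Hom{w}$-variable over the fixed cycle $S$, to the Bochner--Martinelli section $\overline{z-\tau}$; the pairing stays nonzero along the homotopy exactly because the hyperplanes in $S^*$ avoid $\Omega\ni\boldsymbol{\tau}$, the difference of the two kernels is exact on $S$, and Bochner--Martinelli reproduces $f(\boldsymbol{\tau})$ by the classical local computation. If you want to salvage your outline, replace the dilation of the domain by this kind of section homotopy, which requires neither star-shapedness nor any connectedness of a family of domains.
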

 \par When $S$ is $C^2$, the only complex hyperplane incident to $\Hom{z} \in S$ avoiding $\Omega$ is the maximal complex subspace. Thus $\Hom{w}$ is a function of $\Hom{z}$, and given a defining function $\rho$ we have
 \[ \Hom{w} = [-\langle \partial \rho,z \rangle: \partial \rho] \]
 Plugging this expression into  the integral of Thm. \ref{thm:APS} and identifying sections of line bundles with functions produces the \emph{Leray kernel}
 \[ L\{f\}(\tau) := \frac{1}{(2\pi i)^n} \int_S f(z) \frac{\partial \rho \wedge (\dbar \partial \rho)^{n-1}}{\langle \partial \rho, z - \tau \rangle^n} \] 
 and the theorem implies that the Leray kernel reproduces values of holomorphic functions on $\Omega$ with controlled boundary values.
 \subsection{The Fefferman Form, Barrett's Preferred Measure}\hfill
\par Let $\Omega \subset \Ps^n$ be strongly $\C$-convex. Given a defining function $\rho$ for $S$ so that $\rho < 0$ on the pseudoconvex side, the \emph{Fefferman form} (\cite{Fefferman_1979}, p. 259) is a $2n-1$-form on $S$ by the identity 
$$ \mu_{S, Fef} \wedge d\rho = 2^{\frac{2n}{n+1}} (-1)^n \begin{vmatrix} 0 & \rho_{\overline{z_j}} \\
\rho_{z_k} & \rho_{z_k, \overline{z_j}}
\end{vmatrix}^{\frac{1}{n+1}} \omega_{\C^n}$$
where $\omega_{\C^n}$ is the Euclidean area form on $\C^n$. Positivity of the Fefferman form follows from the positivity of the Levi form. Given a holomorphic transformation $F: S \rightarrow F(S)$ we have
\[ F^*(\mu_{F(S), Fef}) = |\det(F')|^{\frac{2n}{n+1}} \mu_{S, Fef} \]
In the specific case of $F$ a projective transformation, the above identity implies that the expression $|z_0|^{2n}\mu_{S, Fef}$ is a projectively invariant $O_S(n,n)$-valued $2n-1$ form. For convenience, we will write just $\mu_{S, Fef}$ to mean the bundle-valued form.
\par The Fefferman form defines an invariant norm on sections of $O_S(-n,0)$ via
\[ ||f||^2 = \int_S |f|^2 \mu_{S, Fef} \]
Note that the integrand is an $O_{S}(-n,0) \otimes O_S(0,-n) \otimes O_S(n,n)$-valued $2n-1$ form, in other words an ordinary $2n-1$ form.
\par For studying $S$ along with its dual hypersurface $S^*$, one should use Barrett's preferred measure $\mu_{S, Bar}$, which is the Fefferman form multiplied by a projectively invariant scalar. This measure factorizes the Universal CFL form,
\[ |\omega| = \mu_{S, Bar}^{\frac{1}{2}} \ \mu_{S^*, Bar}^{\frac{1}{2}} \]
and so connects more optimally with the duality pairing between $\Omega$ and its dual. See section 8 of \cite{barrett_2015}. 
\par Using a choice of projectively invariant norm $\mu$, one can complete $O_{S}(-n,0)$ to the $L^2$-space $L^2(S, \mu)$, and define the corresponding Hardy space as the $L^2$-closure of restrictions of sections holomorphic in a neighborhood of $\Omega$
\[ H^2(\Omega, \mu) := \overline{\{f|_S| f\in \sO_{\overline{\Omega}}(-n,0)\}} \]
For $\boldsymbol{\tau} \in \Omega$, the Leray kernel directly recovers the values of functions in $H^2(\Omega, \mu)$. For $\boldsymbol{\tau} \in S$, one must either take a non-tangential limit, or take the principal value of a singular integral. For proofs in one variable, see \cite{Duren_1970} Chapter 1 and \cite{Bell_1992} Chapter 5. In fact, more is true. These singular integrals give bounded maps $L^2(S, \mu) \rightarrow H^2(\Omega, \mu)$. In one variable, see the seminal work of \cite{Calderon_1977} and \cite{Coifman_McIntosh_Meyer_1982}. For proofs in several variables, as well as an excellent survey, see \cite{Lanzani_Stein_2013}.

\section{Piece-wise Smooth Domains}
\par Let $\Omega \subset \Ps^n$ be an open domain. The boundary $S$ is \emph{piece-wise smooth} if 
\begin{enumerate}[(a)]
    \item There are finitely many disjoint, connected $C^2$ hypersurfaces $H_j$ so that $S \subset \cup H_j$
    \item For $k \leq n$, every set of $k$ boundary hypersurfaces $H_j$ meets complex transversely.
    \item For $k > n$, every set of $k$ boundary hypersurfaces meets real transversely.
    \item For each $\Hom{z} \in S$ and sufficiently small neighborhood $U \ni \Hom{z}$, $\Omega \cap U$ is connected.
\end{enumerate}
\par Item $c)$ implies that no more that $2n$ hypersurfaces can meet at one point. Given $k$ hypersurfaces $H_{j_1}, \ldots, H_{j_k}$, the \emph{edge} $\edge$ is where exactly these $k$ hypersurfaces intersect and no others. It follows that $\edge$ is a manifold of real dimension $2n-k$, which is totally real for $k \geq n$.
\begin{lem}
\label{lem:TCconvex}
Let $\Omega \subset \Ps^n$ be a piece-wise smooth, pseudoconvex domain, $n \geq 2$. For $\Hom{z} \in \edge$, let $\rho_{j_l}$ be local defining functions for $H_{j_l}$ such that $\rho_{j_l} < 0$ on part of $\Omega$. Then for a sufficiently small neighborhood $U \ni \Hom{z}$, $\Omega \cap U = \cap_{l=1}^k \{\rho_{j_l} < 0\} \cap U$.
\end{lem}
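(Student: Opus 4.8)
The plan is to study $\Omega$ near $\mathbf{z}$ through the local decomposition into chambers cut out by the $k$ hypersurfaces, to reduce the claim to the assertion that $\Omega$ fills exactly one chamber, and finally to use pseudoconvexity to exclude every other possibility. First I would localize: since there are finitely many boundary hypersurfaces and $\mathbf{z}$ lies on $H_{j_1}, \ldots, H_{j_k}$ and no others, shrink $U$ so that $U \cap S \subseteq \bigcup_{l=1}^k H_{j_l}$ and $U$ lies in the common domain of the $\rho_{j_l}$. The transversality hypotheses (complex transversality (b) when $k \le n$, real transversality (c) when $k > n$) both force the differentials $d\rho_{j_1}, \ldots, d\rho_{j_k}$ to be $\mathbb{R}$-linearly independent at $\mathbf{z}$, so after further shrinking the map $R := (\rho_{j_1}, \ldots, \rho_{j_k}) : U \to \mathbb{R}^k$ is a submersion. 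Hence $U \setminus \bigcup_l H_{j_l}$ is the disjoint union of the $2^k$ connected \emph{chambers} $C_{\boldsymbol{\epsilon}} := \{\operatorname{sign}\rho_{j_l} = \epsilon_l,\ l=1,\ldots,k\}$, indexed by $\boldsymbol{\epsilon} \in \{\pm\}^k$, and $V := \bigcap_l \{\rho_{j_l} < 0\} \cap U = C_{(-,\ldots,-)}$ is one of them.

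Next I would show $\Omega$ is built from whole chambers. Because $\partial\Omega = S \subseteq \bigcup_l H_{j_l}$ inside $U$, the set $\Omega^{\circ} := (\Omega \cap U)\setminus \bigcup_l H_{j_l}$ is open and relatively closed in $U \setminus \bigcup_l H_{j_l}$: a limit point of $\Omega$ lying off the hypersurfaces is in $\Omega$ or in $\partial\Omega \subseteq \bigcup_l H_{j_l}$, and the latter is impossible. Thus $\Omega^{\circ} = \bigcup_{\boldsymbol{\epsilon} \in E} C_{\boldsymbol{\epsilon}}$ for some index set $E$. Condition (d) says $\Omega \cap U$ is connected, which means the chambers of $E$ are joined across faces lying \emph{interior} to $\Omega$. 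Using that each $H_{j_l}$ genuinely bounds $\Omega$ near $\mathbf{z}$ with $\Omega$ on the side $\{\rho_{j_l} < 0\}$ (the content of the defining-function sign convention "$\rho_{j_l} < 0$ on part of $\Omega$"), I would run a flip-one-coordinate argument: if $C_{\boldsymbol{\epsilon}} \subseteq \Omega$ with $\epsilon_{l} = +$, then the face to the chamber obtained by flipping the $l$-th sign cannot be a boundary face (else $\Omega$ would sit on the $+$ side of $H_{j_l}$ there), so it is interior and that chamber also lies in $\Omega$. Iterating drives every $+$ to $-$, giving $(-,\ldots,-) \in E$, i.e. $V \subseteq \Omega$.

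The heart of the matter — and the step I expect to be the main obstacle — is to prove $E = \{(-,\ldots,-)\}$, equivalently that $\Omega$ has no reentrant (reflex) corner along the edge. If some $\boldsymbol{\epsilon} \in E$ had $\epsilon_{l_0} = +$, the flipping argument already shows $\Omega$ contains both sides of $H_{j_{l_0}}$ across an interior face, while genuine boundedness forces $H_{j_{l_0}}$ to also admit a boundary crossing for some other choice of the remaining signs; as those signs vary, the image $R(\Omega \cap U) \subseteq \mathbb{R}^k$ is then a non-convex (reflex) union of orthants. I would contradict pseudoconvexity through the Kontinuitätssatz: picking a complex-transverse pair $H_{j_{l_0}}, H_{j_m}$ — available for any pair since $n \ge 2$, by (b) — the complex transversality supplies local holomorphic coordinates in which $\Omega$ approximates a tube domain over this reflex region, and Bochner's tube theorem (equivalently, an explicit family of analytic discs whose boundaries lie in $\Omega$ but whose centers pass through the reentrant edge into the complement) violates pseudoconvexity. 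The real difficulty lies here: making this disc family rigorous in the curved, possibly higher-codimension ($k > n$) setting, i.e. controlling the $O(|z|^2)$ deviation of the $\rho_{j_l}$ from the flat tube model and selecting the correct complex-transverse slice.

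Granting the crux, the conclusion is immediate. With $E = \{(-,\ldots,-)\}$ there are no interior faces, so $\Omega^{\circ} = C_{(-,\ldots,-)} = V$; since $\Omega \cap U$ is open and any leftover hypersurface points would be boundary faces lying in $S = \partial\Omega$ and hence off $\Omega$, we get $\Omega \cap U = V = \bigcap_{l=1}^k \{\rho_{j_l} < 0\} \cap U$, as claimed.
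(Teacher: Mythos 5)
Your overall skeleton --- cut $U$ into the $2^k$ sign-chambers of $(\rho_{j_1},\ldots,\rho_{j_k})$, show $\Omega\cap U$ is a union of whole chambers, and eliminate every configuration other than $\bigcap_l\{\rho_{j_l}<0\}$ by a Kontinuit\"atssatz argument at a reflex corner --- is the same as the paper's. The genuine gap is that the step you explicitly concede (``granting the crux'') is the entire content of the lemma, and the two ingredients you defer are exactly what the paper's proof consists of. First, the reduction to a two-hypersurface reflex corner is combinatorial, not automatic: viewing the chambers lying in $\Omega$ as a connected induced subgraph $G$ of the hypercube $\{-1,1\}^k$, the paper takes a shortest path, inside the smallest subcube containing $G$, from a vertex of $G$ to the set of $G$-vertices in the opposite facet, and its last three vertices produce chambers $v^1,v^2,v^3\in G$, $v^4\notin G$ differing in only two coordinates; this locates a codimension-two edge $H_a\cap H_b\subset S$ near which exactly three of the four local chambers lie in $\Omega$. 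Your remark that $R(\Omega\cap U)$ is a non-convex union of orthants does not by itself produce such a pair $(H_a,H_b)$, and without it there is no two-dimensional reflex model to slice to. Second, Bochner's tube theorem cannot simply be invoked: the domain is a tube over a non-convex cone only to first order, and pseudoconvexity does not transfer across the $O(|z|^2)$ error. What the paper does instead is write down an explicit disc family for the flat model $\C^2\setminus\{Im(\hat z_1)\ge 0,\ Im(\hat z_2)\ge 0\}$: pairs of M\"obius transformations $F_t=(f_1(\cdot,t),f_2(\cdot,t))$ flattening two circular arcs, arranged so that $F_0(\D)$ and every boundary $F_t(\partial\D)$ are \emph{compactly} contained in the model while $F_1(\D)$ exits compactly; these compactness margins are precisely what make the same family violate the Kontinuit\"atssatz for the actual curved domain once coordinates are chosen so that $S$ is uniformly close to the tangent cone, and the case $n>2$ then follows by slicing with a complex $2$-plane transverse to $H_a$ and $H_b$.

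Separately, your flip-one-coordinate argument for $V\subseteq\Omega$ is false as stated, not merely incomplete, because ``$\rho_{j_l}<0$ on part of $\Omega$'' does not prevent $\Omega$ from abutting $H_{j_l}$ from the side $\{\rho_{j_l}>0\}$ along a \emph{different} face. Take $k=3$ in $\C^2$ with $\rho_1=Im(\hat z_1)$, $\rho_2=Im(\hat z_2)$, $\rho_3=Re(\hat z_1)+Re(\hat z_2)$, and let $\Omega$ consist locally of the chambers $(-,+,+),\,(-,-,+),\,(+,-,+),\,(+,-,-)$ together with the three interior faces joining them consecutively. This configuration satisfies the piece-wise smooth axioms (a)--(d) and the sign convention for every $\rho_{j_l}$ (each is negative on some chamber of $\Omega$), yet the chamber $(-,-,-)$ is \emph{not} in $\Omega$, so the conclusion of your flip argument fails; consistently with the lemma, this configuration is not pseudoconvex, having a three-in/one-out reflex corner along $\{\rho_1=\rho_2=0,\ \rho_3>0\}$. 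So $V\subseteq\Omega$ genuinely requires pseudoconvexity and cannot be split off as a formal preliminary using only connectivity and the sign convention. The step is also dispensable: once the crux shows $\Omega\cap U$ is a single chamber, the sign convention identifies that chamber as $(-,\ldots,-)$.
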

One way to interpret the above result is that the \emph{tangent cone} of $\overline{\Omega}$ at $\Hom{z}$ is convex. The main idea of the proof is to use the Kontinuit\"atssatz, see Figure \ref{fig:boundarywedge}.
\begin{figure}
	\begin{center}
		\begin{tikzpicture}
			\draw (-2, 0) -- (0,0);
			\draw (0,-2) -- (0,0);
			\draw[dashed] (0,0) -- (2,0);
			\draw[dashed] (0,0) -- (0,2);
			\draw (2,0) circle (0pt) node[anchor = west]{$\rho_1$};
			\draw (0,-2) circle (0pt) node[anchor = north]{$\rho_2$};
			\draw (-1,-1) circle (0pt) node{$\Omega$};
			
			\draw[dashed] (4, 0) -- (6,0);
			\draw[dashed] (6,-2) -- (6,0);
			\draw (6,0) -- (8,0);
			\draw (6,0) -- (6,2);
			\draw (8,0) circle (0pt) node[anchor = west]{$\rho_1$};
			\draw (6,-2) circle (0pt) node[anchor = north]{$\rho_2$};
			\draw (5,-1) circle (0pt) node{$\Omega$};
			
			\draw[red] (5,0) -- (6,-1);
			\draw[red] (5,0) -- (5.9, .9);
			\draw[red] (6,-1) -- (6.9,-.1);
			\draw[red, dashed] (5.9,.9) -- (6.9,-.1);
		\end{tikzpicture}
		\caption{$\Omega$ piece-wise smooth and pseudoconvex on the left, $\Omega$ piece-wise smooth and not pseudoconvex the right. A boundary of a family of holomorphic disks is in solid red, with the dashed line representing the final disk leaving the domain $\Omega$.}
		\label{fig:boundarywedge}
	\end{center}
\end{figure}
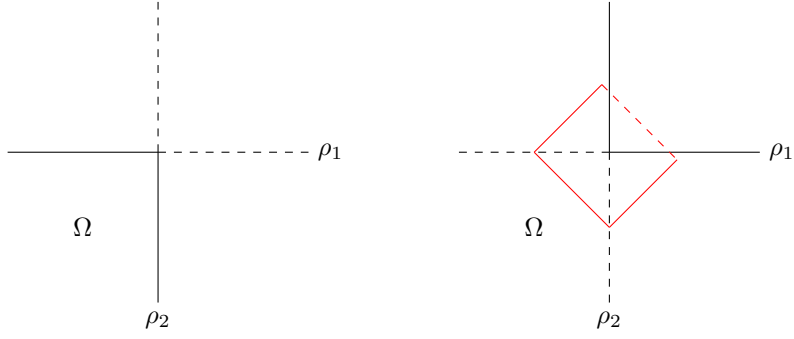

\begin{proof}
We first prove the lemma when $\Hom{z}$ is in an edge of dimension $2n-2$, that is there are precisely two boundary hypersurfaces $H_1$ and $H_2$ meeting complex transversely at $\Hom{z}$.
\par In this sub-case, we will also first prove the statement when $n=2$. The idea is to produce a family of holomorphic disks $F_t: \overline{\D} \rightarrow \C^2, 0 \leq t \leq 1$ such that $F_0(\D) \subset \subset \Omega$ and $F_t(\partial \D) \subset \subset \Omega$ for all $t$ while $F_1(\D) \cap int(\Omega^c) \neq \emptyset$, violating the so-called Kontinuit\"atssatz (see Theorem 3.3.5 of \cite{krantz_2008}).
\par By changing coordinates, assume $z = 0$ and  $T_{0}(H_j) = \{Im(\hat{z}_j) = 0\}$, the neighborhood $U$ is a ball of radius 1, $\max_{z \in S} dist(z, T_0(H_1) \cup T_0(H_2))$ is arbitrarily small. We construct a family $F_t$ as above for when $\Omega$ is the complement of $\{Im(\hat{z}_1) \geq 0, Im(\hat{z}_2) \geq 0\}$, and it follows from our choice of coordinates that $F_t$ will have the desired properties for $\Omega$ as well.
\par Consider four points $p_1, \ldots, p_4$ on $\partial \D$ in clockwise order. Take two line segments, $C_1$ from $p_1$ to $p_4$ and $C_2$ from $p_2$ to $p_3$. Let $C_1(t), C_2(t), 0 \leq t \leq 1$ be homotopies of $C_1, C_2$ respectively, relative to each endpoint such that $C_j(0) = C_j$, $C_j(t)$ is a segment of a circle for each $t > 0$ and the $C_j(1)$ cross transversely in the interior of the $\D$ bounding a set $E$ compactly contained in $\D$. Let $f_1(z, t)$ be the M\"obius transformation so that $Im(f_1(C_1(t),t)) = 0$ and $Im(f_1(z,t)) < 0$ below $C_1(t)$. Let $f_2(z, t)$ be the M\"obius transformation so that $Im(f_2(C_2(t), t)) = 0$ and $Im(f_2(z, t)) < 0$ above $C_2(t)$. It follows that $F_t(z) := (f_1(z, t), f_2(z, t))$ has the desired properties. See Figure \ref{fig:analyticdisks} for construction of the family of analytic disks.
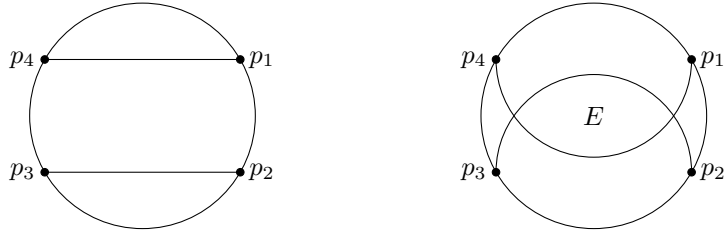
\begin{figure}
	\begin{center}
	\begin{tikzpicture}
		\draw (-3, 0) circle (1.5);
		\filldraw (-1.7, .75) circle (.05) node[anchor = west]{$p_1$};
		\filldraw (-1.7, -.75) circle (.05) node[anchor = west]{$p_2$};
		\filldraw (-4.3, -.75) circle (.05) node[anchor = east]{$p_3$};
		\filldraw (-4.3, .75) circle (.05) node[anchor = east]{$p_4$};
		\draw (-1.7, .75) -- (-4.3, .75);
		\draw (-1.7, -.75) -- (-4.3, -.75);
		
		\draw (3, 0) circle (1.5);
		\node at (3,0) {$E$};
		\filldraw (4.3, .75) circle (.05) node[anchor = west]{$p_1$};
		\filldraw (4.3, -.75) circle (.05) node[anchor = west]{$p_2$};
		\filldraw (1.7, -.75) circle (.05) node[anchor = east]{$p_3$};
		\filldraw (1.7, .75) circle (.05) node[anchor = east]{$p_4$};
		\draw (1.7, .75) arc (180:360:1.3);
		\draw (4.3, -.75) arc (0:180:1.3);
	\end{tikzpicture}
	\caption{Construction of the family of holomorphic disks, $F_0$ on the left, $F_1$ on the right. The set $E$ has $Im(f_j(E),1) > 0, \ j = 1,2$ and thus extends outside $\Omega$. Compare with Figure \ref{fig:boundarywedge}}
	\label{fig:analyticdisks}
	\end{center}
\end{figure}
	\par The sub-case when $n\neq2$ follows by considering the intersection of $\Omega$ with a complex plane of dimension $2$ which is transverse to $H_1$ and $H_2$. 
	\par We are now ready to prove the statement for $\Hom{z}$ in an arbitrary edge. The idea is to show that if the claim is violated for $\Hom{z}$ in an arbitrary edge, then it is also violated for a neighboring edge of dimension $2n-2$.
	\par Assume $z = 0$, and each $H_l$ has defining function $\rho_l$. Let $U$ be sufficiently small so that $\Omega \cap U$ is connected. Since all the $H_l$ are real-transverse, $\cup_{l=1}^k H_l$ divides $U$ into $2^k$ components $\{z | sgn(\rho_l(z)) = \e_l\}, \e \in \{-1, 1\}^k$. Let $C_k$ denote the hypercube graph on $\{-1, 1\}^k$, and let $G(\Omega \cap U)$ be the induced graph on the vertices $\e$ such that the component of $U \backslash \cup_{l=1}^k \{\rho_l = 0\}$ corresponding to $\e$ is contained in $\Omega$. Let $C$ denote the smallest sub-hypercube containing $G(\Omega \cap U)$. Our goal is to show that $C$ consists of precisely one vertex, so assume this is not the case.
	\par Since $\Omega \cap U$ is connected, $G(\Omega \cap U)$ is connected. Since $G(\Omega \cap U)$ cannot be all of $C$, (otherwise there are extraneous $H_l$ that we could discard) there must be two adjacent vertices in $C$ such that one vertex is in $G(\Omega \cap U)$ and one is not. By applying an automorphism of the cube we may assume that the all $1$'s vector $\Hom{1} \in G(\Omega \cap U)$ and $(-1, 1, \ldots, 1) \not \in G(\Omega \cap U)$. Since $G(\Omega \cap U)$ is contained in no smaller hypercube, $G' := G(\Omega \cap U) \cap \{ \e | \e_1 = -1\} \neq \emptyset$. Let $\gamma$ be a shortest path from $\Hom{1}$ to $G'$. $\gamma$ must contain at least 3 vertices, and consider the end of the path. Up to permuting coordinates, the last three vertices must be 
	 \begin{align*}
	 	(1, -x_2, x_3,\ldots, x_k) \rightarrow (1, x_2, x_3,  \ldots, x_k) \rightarrow (-1, x_2, x_3, x_4, \ldots, x_k).
	 \end{align*}
 The vertex $v^4:=(-1, -x_2, x_3, \ldots, x_k)$ cannot be in $G(\Omega \cap U)$ since then there would be a shorter path to $G'$. Let $v^1, v^2, v^3$ be the three last vertices of $\gamma$. By definition then $\Omega$ contains the components $\{sgn(z) = v^j\}, j=1, \ldots, 3$ and not $\{sgn(z) = v^4\}$. It follows that $H_1 \cap H_2$ is an edge contained in $S$ also violating the conclusion of the lemma, placing us back in the previous sub-case.
\end{proof}
\par Now that we have a rough description of the geometry our domains will satisfy, we need the concept of a weak tangent in order to eventually define our required notion of convexity. Denote by $TC_{\Hom{z}}(E)$ the \emph{tangent cone} of a set $E$ at $\Hom{z}$.
\begin{dfn}{Strong and Weak Tangents} \hfill
\label{def:weaktangents}
Let $\Omega \subset \Ps^n$ be a pseudoconvex, piece-wise smooth domain. $\Hom{w}$ is a \emph{strong tangent} to $\Hom{z} \in \edge$ if $\ell_{\Hom{w}}$ is the maximal complex subspace to one of the $H_{j_l}$ at $\Hom{z}$. $\Hom{w}$ is an \emph{interior weak tangent} if 
\[ TC_{\Hom{z}}(\ell_{\Hom{w}} \cap \Omega) \subset \edge \]
$\Hom{w}$ is a \emph{weak tangent} if it is in the closure of the interior weak tangents and is not a strong tangent. Denote by $W_{\Hom{z}}(\Omega)$ the strong and weak tangents of $\Omega$ at $\Hom{z}$.
\end{dfn}
In particular, if a hyperplane $\ell_{\Hom{w}}$ through $\Hom{z}$ avoids $\Omega$ then $\Hom{w}$ is a weak tangent.
\begin{prop}{Dual Cycles} \hfill 
\label{prop:weaktangents}
\par Let $\Omega$ be a pseudoconvex domain with piece-wise smooth boundary, $\Hom{z} \in \edge$ and $\Delta^k$ the standard $k-1$-simplex with $k$ vertices. We can parameterize $W_{\Hom{z}}(\Omega)$ via
\begin{align*}
	\Delta^k & \rightarrow W_{\Hom{z}}(\Omega) \\
	t & \mapsto [-\sum_{l} t_l \partial \rho_{j_l} \cdot z : \sum_{l} t_l \partial \rho_{j_l}]
\end{align*}
In particular, the vertices of $\Delta^k$ map to the strong tangents at $\Hom{z}$, and the union of the strong and weak tangents give a smooth cycle in $(\Ps^n)^*$.
\end{prop}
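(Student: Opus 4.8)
The plan is to pass to affine coordinates centered at $\Hom{z}$, use Lemma~\ref{lem:TCconvex} to replace $\Omega$ by its tangent cone, and then read the whole statement off convex duality for polyhedral cones. First I would normalize so that $z=0$, i.e. $\Hom{z}=[1:0:\dots:0]$. By Lemma~\ref{lem:TCconvex} we have $\Omega\cap U=\cap_l\{\rho_{j_l}<0\}$ on a small ball $U$, so the tangent cone $TC_{\Hom{z}}(\overline{\Omega})$ is the closed convex polyhedral cone $\Gamma=\{Z:\phi_l(Z)\le 0,\ l=1,\dots,k\}$, where $\phi_l(Z):=\mathrm{Re}\langle \partial\rho_{j_l},Z\rangle$ is the linearization of $\rho_{j_l}$ at the origin, and $TC_{\Hom{z}}(\edge)=\{Z:\phi_l(Z)=0\ \forall l\}$ is the common kernel. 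Every hyperplane through $\Hom{z}$ is $\ell_{\Hom{w}}$ with $\Hom{w}=[0:w']$, and a limiting/vacuity argument for the tangent-cone condition shows that $W_{\Hom{z}}(\Omega)$ is exactly the set of complex hyperplanes $V=\{\langle w',Z\rangle=0\}$ through $\Hom{z}$ that miss $\mathrm{int}\,\Gamma$; I would record this reformulation first, since the rest is then purely convex-geometric.

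The inclusion of the image of $\Delta^k$ into this set is a direct computation. For $t=e_l$ one gets $\Hom{w}=[0:\partial\rho_{j_l}]$, whose hyperplane is the maximal complex subspace of $H_{j_l}$, i.e. the $l$-th strong tangent, so the vertices map to the strong tangents. For any $t\in\Delta^k$ with $w'=\sum_l t_l\partial\rho_{j_l}$ one has $\mathrm{Re}\langle w',Z\rangle=\sum_l t_l\phi_l(Z)$, which is $<0$ on $\mathrm{int}\,\Gamma$ whenever some $t_l>0$; hence $V$ misses $\mathrm{int}\,\Gamma$, and on the open simplex it meets $\Gamma$ only inside $TC_{\Hom{z}}(\edge)$.

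The reverse inclusion, surjectivity, is where the real work lies. Given $V$ disjoint from the open convex cone $\mathrm{int}\,\Gamma$, I would separate: since the separator must contain the subspace $V$ and pass through the apex $0$, it yields a real functional $\psi$ with $\psi|_V=0$ and $\psi\le 0$ on $\Gamma$. Vanishing on $V$ forces $\psi=\mathrm{Re}\langle \lambda w',\cdot\rangle$ for some $\lambda\in\C$, $\lambda\ne 0$, while $\psi\le 0$ on $\Gamma$ together with the polarity $\Gamma^{\circ}=\mathrm{cone}\{\phi_l\}$ (irredundant by transversality) gives $\psi=\sum_l a_l\phi_l=\mathrm{Re}\langle \sum_l a_l\partial\rho_{j_l},\cdot\rangle$ with $a_l\ge 0$. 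Nondegeneracy of the real pairing $\mathrm{Re}\langle\cdot,\cdot\rangle$ then yields $\lambda w'=\sum_l a_l\partial\rho_{j_l}$, so projectively $\Hom{w}=[0:\sum_l t_l\partial\rho_{j_l}]$ after normalizing $t_l=a_l/\sum_m a_m\in\Delta^k$. This exhibits every element of $W_{\Hom{z}}(\Omega)$ in the image and matches the face stratification of $\Delta^k$ with the strong/weak decomposition.

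Two points need care, and these I expect to be the main obstacles. For the word \emph{parameterize} I would establish injectivity from transversality: when $k\le n$, complex transversality makes $\partial\rho_{j_1},\dots,\partial\rho_{j_k}$ linearly independent over $\C$, so distinct barycentric coordinates give distinct projective points and the map is an embedding of $\Delta^k$ onto $W_{\Hom{z}}(\Omega)$; when $k>n$ only the real independence from condition (c) survives, the simplex may fold, and the image drops to the expected dimension, so over the deepest edges one gets lower-dimensional (measure-zero) strata rather than an embedded simplex. Finally, for the global assertion that $\bigcup_{\Hom{z}\in S}W_{\Hom{z}}(\Omega)$ is a smooth cycle in $(\Ps^n)^*$, I would assemble the fiberwise maps into coordinates $(\Hom{z},t)$ on each stratum and verify the gluing: a face $\{t_m=0\}$ of the simplex over a point of $\edge$ matches the tangent hyperplanes seen from the adjacent edge $\sigma(j_1,\dots,\hat{j_m},\dots,j_k)$ as $\Hom{z}$ leaves $H_{j_m}$, so neighboring boundary faces are identified and no boundary survives. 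Proving this compatibility smoothly across the stratum junctions — so that the pieces fit into a single closed $(2n-1)$-manifold which reduces to the classical smooth dual hypersurface off the totally real higher-codimension edges — is the crux of the argument.
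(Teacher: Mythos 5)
Your proposal is correct in substance, and its convex--geometric core (separation, then the polarity $\Gamma^{\circ}=\mathrm{cone}\{\phi_l\}$ to force $\lambda w'=\sum_l a_l\partial\rho_{j_l}$) is the same Hahn--Banach argument the paper runs in its ``convex cone'' case. Where you genuinely diverge is in how the curved domain is compared to the cone. The paper never reduces to $TC_{\Hom{z}}(\overline{\Omega})$ abstractly: it proves a quantitative graph estimate --- constants $C,c$ such that $\|z\|<c$ and $Im(\alpha_l\cdot z)>C\|P_{\alpha_l}z\|^{2}$ for a single $l$ force $z\notin\Omega$ --- and then runs a sequence argument ($z_s\to 0$, $z_s/|z_s|\to p$) to verify the weak-tangent condition on the open simplex, plus a short complex-line argument to exclude hyperplanes outside the closed cone. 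You replace all of this with the soft identities $TC_{\Hom{z}}(\overline{\Omega})=\Gamma$ (via Lemma \ref{lem:TCconvex}) and $TC_{\Hom{z}}(\ell_{\Hom{w}}\cap\Omega)\subset TC_{\Hom{z}}(\overline{\Omega})\cap\ell_{\Hom{w}}$, which is cleaner and does work.

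Two caveats, one of which you should repair. First, your pivotal ``reformulation'' --- that $\weak$ is exactly the set of hyperplanes through $\Hom{z}$ missing $\mathrm{int}\,\Gamma$ --- is precisely where the analytic content of the proposition sits, and ``a limiting/vacuity argument'' is not a proof of it. To close it you need: (i) the inclusion $TC(\ell_{\Hom{w}}\cap\Omega)\subset\Gamma\cap\ell_{\Hom{w}}$ combined with your observation that $\Gamma\cap\ell_{\Hom{w}}\subset TC_{\Hom{z}}(\edge)$ when all $t_l>0$, followed by a closure argument for the boundary faces; and (ii) for the converse, an actual production of points: if $\ell_{\Hom{w}}$ meets $\mathrm{int}\,\Gamma$ in a direction $p$, then first-order Taylor expansion of the $\rho_{j_l}$ places the ray $rp$, $0<r\ll 1$, inside $\Omega\cap\ell_{\Hom{w}}$, so $p\in TC(\ell_{\Hom{w}}\cap\Omega)\setminus TC_{\Hom{z}}(\edge)$ --- this is the paper's parenthetical complex-line remark. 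Both steps are routine, so this is an omission of detail rather than a wrong turn. Second, a trade-off worth knowing: the paper's quadratic bound is not wasted effort, since it is cited again in the proof of Prop.~\ref{prop:InvariantSection} (the bounds $|y_j|\le C_j\|(x_1,x_2)\|^{2}$), so your softer route, while sufficient here, discards a byproduct the paper relies on later. Finally, your injectivity analysis ($\C$-independence for $k\le n$, folding for $k>n$) and your gluing sketch for the global cycle go beyond what the paper writes down --- the paper treats the fiberwise parameterization as the entire content and leaves the assembly implicit --- so labeling the gluing ``the crux'' misweights the argument relative to the paper's own standard, though your sketch of it is the right one.
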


\begin{proof}
First we prove the statement for a convex cone. 
\par Assume that $z = 0$ (so $\Hom{z} = [1:0:\ldots:0]$), and suppose $\Omega = \{Im(\alpha_l \cdot z) < 0 , l = 1, \ldots, k\}$. Then the weak tangents through $0$ coincide with the positive cone over the $\alpha_l$, 
	\[  C := \{[0:w] | w =  \sum t_l \alpha_l, t_l \geq 0\} \]
\par Suppose we take a line of the form $\sum t_l \alpha_l \cdot z = 0$, $t_l \geq 0$, then we have that $\sum t_l Im(\alpha_l \cdot z) = 0$, so for at least one $l$, $Im(\alpha_l \cdot z) \leq 0$ and the line avoids $\Omega$.
\par Now suppose that we take a line $\alpha$ which is not in $C$. Since $C$ is convex, by Hahn-Banach there is a real hyperplane $\ell_{z} := \{ w | Im( z \cdot w) = 0 \}$ so that $Im( z \cdot C) < 0$ and $Im( z \cdot \alpha) > 0$. The first inequality implies that $z \in \Omega$, and the second implies that $\alpha$ cannot be written as a positive linear combination of the $\alpha_l$.
\par Now we prove the statement for general piece-wise smooth domains. Let $\Hom{z} \in \edge$. For each $l=1, \ldots, k$, write 
\[ \rho_{j_l} = -Im(\alpha_l \cdot z) + h.o.t. \]
For each $l$, let $P_{\alpha_l}$ denote the orthogonal projection onto the real hyperplane $T_{0}(H_{j_l})$. We claim that there are constants $C > 0, 0 < c << 1$ such that if $||z|| < c$ and $Im(\alpha_l \cdot z) > C ||P_{\alpha_l} z||^2$ for one $l$ then $z \not \in \Omega$.
\par We prove this for a fixed $l$, then the constants $C$ and $c$ can be chosen to be the max and min, respectively, of the all the chosen constants. Fixing $l$, choose an orthogonal change of coordinates so that $T_{0}(H_{j_l}) = \C^{n-1} \times \mathbb{R}$. Thinking of $H_{j_l}$ as a graph over $\hat{z}_1, \ldots, \hat{z}_{n-1}, Re(\hat{z}_n)$, we have up to second order
\[ Im(\hat{z}_n) = \sum_{i,j=1}^{n-1} L_{i,j}\hat{z}_i\overline{\hat{z}_j} + Re(Q_{i,j} \hat{z}_i\hat{z}_j) + \sum_{i=1}^{n-1} Im(R_i\hat{z}_i) \cdot Re(\hat{z}_n) + \tilde{R} \cdot Re(\hat{z}_n)^2 \]
The right hand side is a quadratic form in $z_1, \ldots, z_{n-1}, Re(z_{n})$, so taking $C$ larger than the largest eigenvalue of this quadratic form suffices.
\par Now suppose that we take $\Hom{w}$ to be in the interior of the cone over the $\alpha_k$, i.e. of the form $\Hom{w} = [0: w], \ w = \sum_l t_l \alpha_l$, $t_l > 0$. We want to show that $TC_{0}(\Omega \cap \ell_{\Hom{w}}) \subset T_{0}(\edge) = \{Im( \alpha_l \cdot z) = 0, l = 1, \ldots, k\}$. Suppose there were a sequence $z_s \rightarrow 0$ with $z_s \cdot w = 0$, $z_s \in \Omega$ and $\frac{z_s}{|z_s|} \rightarrow p \not \in \{Im(\alpha_l \cdot z) = 0, l = 1, \ldots, k\}$. It follows from $w \cdot p = 0$ that there is at least one $l$ with $Im(\alpha_l \cdot p) > 0$. We then have for $0 < t < 1$ and a constant $K$ that $Im(\alpha_l \cdot t \cdot p) > t \cdot K  \cdot ||p||$. It then follows for $s >> 1$ that
\[ Im(\alpha_l \cdot z_s) >  \tilde{K}  \cdot ||z_s|| > C ||P_{\alpha_l}(z_s)||^2 \] 
contradicting the fact that $z_s \in \Omega$. 
\par Now take $\Hom{w} = [0:w], \ w$ not in the closure of the positive cone over the $\alpha_j$. We know from the case of $\Omega$ a cone that $\ell_{\Hom{w}}$ will intersect the interior of the cone $\{Im(\alpha_l \cdot z) \leq 0 \}$, and from here it's easy to see that $TC_{0}(\Omega \cap \ell_{\Hom{w}})$ is not contained in $T_{0}(\edge)$. (For example, take a complex line $L \subset \ell_{\Hom{w}}$ intersecting the interior of the cone. The intersection of $L$ with the cone is a cone in $\C$ agreeing up to first order with $L \cap \Omega$).
\par Completing the proof just requires shifting the origin back to the original point $z$. The induced transformation in $(\Ps^n)^*$ is $[0:w] \mapsto [-w \cdot z: w]$.
\end{proof}

\section{A Leray Kernel}
\label{sec:LerayKernel}
\par We want to use the notion of weak tangents along with Thm. \ref{thm:APS} to formulate a holomorphic reproducing kernel (i.e. a Leray kernel) for piece-wise smooth, pseudoconvex domains. To this end, we say a piece-wise smooth $\C$-convex domain is \emph{strictly $\C$-convex} if every weak tangent intersects $\overline{\Omega}$ at exactly one point. Let $\pi: I_S \rightarrow S$ be projection onto the $\Hom{z}$-coordinates, keeping in mind that for piece-wise smooth strictly $\C$-convex domains, $\pi^{-1}(\Hom{z}) = \{(\Hom{z}, \Hom{w}) | \Hom{w} \in W_{\Hom{z}}(\Omega)\}$. Over each edge $\sigma := \edge$, write $\omega_{CFL} = \pi^*\eta_{\sigma} \wedge \tau_\sigma$, where $\eta_{\sigma}$ is a $2n-k$-form on $\sigma$, and $\tau$ is a $k-1$-form on $\pi^{-1}(\sigma)$. A generalization of the Leray kernel is the push-forward of the kernel of Thm \ref{thm:APS} by $\pi$, in other words
\[ \Leray = \left( \int_{W_{\Hom{z}}(\Omega)} \frac{1}{\langle \boldsymbol{\tau}, \Hom{w}\rangle^n} \tau_\sigma \right) \eta_\sigma \]
That this operation is well-defined is a small adaptation of the argument on page 61 of \cite{Bott_Tu_1982} for bundle-valued forms, involving applying the decomposition $\omega_{CFL} = \pi^* \eta_\sigma \wedge \tau_\sigma$ in two different sets of affinizations and checking that they differ by the correct factor.
\begin{thm}
\label{thm:LerayKernel}
Let $\Omega$ be a strictly $\C$-convex domain. Then on each edge $\sigma$ of dimension $2n-k$, $\Leray$ defines a projectively invariant $O_{\sigma}(n,0)$-valued $2n-k$-form with the reproducing property
\[ f(\boldsymbol{\tau}) = \int_S f(\Hom{z}) \Leray, \ f \in \sO_{\overline{\Omega}}(-n,0), \ \boldsymbol{\tau} \in \Omega \]
\end{thm}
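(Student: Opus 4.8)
The plan is to obtain the reproducing property directly from Theorem \ref{thm:APS} by a fiber-integration (Fubini) argument along $\pi \colon I_S \to S$, and to read off the bundle weight and the invariance from the way the weights in the $\Hom{w}$-variable cancel before integration.

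First I would check that the hypotheses of Theorem \ref{thm:APS} are met, i.e. that $I_S$ is a smooth $(2n-1)$-cycle. The key observation is that strict $\C$-convexity makes the projection $I_S \to S^*$ onto the $\Hom{w}$-coordinate injective: every weak tangent $\Hom{w} \in S^*$ meets $\overline{\Omega}$ at exactly one point, and that point is the unique $\Hom{z} \in S$ with $(\Hom{z}, \Hom{w}) \in I_S$. Hence $I_S$ is diffeomorphic to $S^*$, which is a smooth cycle by the last assertion of Proposition \ref{prop:weaktangents}. Over the top-dimensional stratum the fiber of $\pi$ is the single strong tangent, while over an edge $\sigma = \edge$ of real dimension $2n-k$ the fiber $\pi^{-1}(\Hom{z}) = W_{\Hom{z}}(\Omega)$ is the $(k-1)$-simplex of Proposition \ref{prop:weaktangents}; thus $\dim_{\mathbb{R}} \pi^{-1}(\sigma) = (2n-k)+(k-1) = 2n-1$, matching the degree of $\omega_{CFL}$, and Theorem \ref{thm:APS} gives $f(\boldsymbol{\tau}) = \int_{I_S} f\, g_{\boldsymbol{\tau}}\, \omega_{CFL}$ for $f \in \sO_{\overline{\Omega}}(-n,0)$ and $\boldsymbol{\tau} \in \Omega$.

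Next I would track the bundle weights. Since $g_{\boldsymbol{\tau}}(\Hom{w}) = \langle \boldsymbol{\tau}, \Hom{w}\rangle^{-n}$ is $O_{(\Ps^n)^*}(-n,0)$-valued in $\Hom{w}$ and $\omega_{CFL}$ is $\sO_{\Ps^n}(n,0) \otimes \sO_{(\Ps^n)^*}(n,0)$-valued, the $\Hom{w}$-weights of $g_{\boldsymbol{\tau}}\, \omega_{CFL}$ cancel, leaving an object that is $\sO_{\Ps^n}(n,0)$-valued in $\Hom{z}$, carries weight $(-n,0)$ in $\boldsymbol{\tau}$, and has no residual $\Hom{w}$-weight; this is exactly the feature that allows it to be integrated over the fibers $W_{\Hom{z}}(\Omega)$. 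Using the decomposition $\omega_{CFL} = \pi^*\eta_\sigma \wedge \tau_\sigma$ and the fact that $f$ is pulled back from $S$, I would then apply the projection formula to move the fiber integral inside:
\[ \int_{I_S} f\, g_{\boldsymbol{\tau}}\, \omega_{CFL} = \int_S f \, \pi_*\!\left( g_{\boldsymbol{\tau}}\, \omega_{CFL} \right) = \int_S f(\Hom{z})\, \Leray, \]
where the last equality is the definition of $\Leray$ as the push-forward. Restricting to a single edge $\sigma$, the surviving $\Hom{z}$-weight shows $\Leray$ is an $O_\sigma(n,0)$-valued $(2n-k)$-form, and its projective invariance is inherited from the invariance of $\omega_{CFL}$ and of the pairing $\langle \boldsymbol{\tau}, \Hom{w}\rangle$ (using $\det M = 1$), together with the affinization-independence of the fiber integration already noted via the Bott--Tu argument.

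The main obstacle I anticipate is the first step: justifying that Theorem \ref{thm:APS}, stated for $S$ and $S^*$ smooth cycles, genuinely applies when $S$ has corners. The resolution is that the relevant smoothness is that of $I_S \cong S^*$, not of $S$, and strict $\C$-convexity together with Proposition \ref{prop:weaktangents} is exactly what guarantees $S^*$ is a smooth closed cycle with no boundary contributions at the edges. A secondary point requiring care is integrability of $g_{\boldsymbol{\tau}}$ over the fibers and the matching of fiber orientations across adjacent edges, but since $\boldsymbol{\tau} \in \Omega$ keeps $\langle \boldsymbol{\tau}, \Hom{w}\rangle$ bounded away from zero on $S^*$, the integrand is smooth on the compact cycle $I_S$ and no convergence issues arise.
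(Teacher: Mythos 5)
Your proposal is correct and takes essentially the same approach as the paper: both invoke Theorem \ref{thm:APS} and then use the fibration $\pi \colon I_S \to S$ together with the decomposition $\omega_{CFL} = \pi^*\eta_\sigma \wedge \tau_\sigma$ to perform the fiber integration (Fubini), with projective invariance inherited from that of $\omega_{CFL}$. Your extra verifications---that strict $\C$-convexity makes $I_S \cong S^*$ a smooth cycle so Theorem \ref{thm:APS} applies despite the corners of $S$, the cancellation of the $\Hom{w}$-weights in $g_{\boldsymbol{\tau}}\,\omega_{CFL}$, and that $\langle \boldsymbol{\tau}, \Hom{w}\rangle$ stays bounded away from zero---are details the paper leaves implicit.
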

Note that this integral gives ``weight" to edges of dimension $< 2n-1$. We show that the edges of dimension $<n$ have no weight (that is $\Leray$ vanishes) and compute an exact expression for edges of dimension $n$. Set $dz := d\hat{z}_1 \wedge \ldots d\hat{z}_n$, and set $dz_{[k]}$ to be the same with the term $d\hat{z}_k$ omitted.
\begin{prop}
\label{prop:LerayCalc}
Let $\Hom{z} \in \sigma$. If $\dim(\sigma) < n$, then $\Leray = 0$. If $\dim(\sigma) = n$, let $\Hom{w}^j, j = 1, \ldots, n$ denote the strong tangents at $\Hom{z}$, properly enumerated, and let $\det_k(\Hom{w}^j)$ denote determinant of the strong tangents with the $k^{th}$ coordinate omitted. Then
\[ \Leray = \frac{1}{(2\pi i)^n}\frac{\det_0(\Hom{w}^j)}{\prod_j \langle \boldsymbol{\tau}, \Hom{w}^j \rangle} z_0^n dz \]
\end{prop}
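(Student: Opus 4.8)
The plan is to realize $\Leray$ as the fiberwise pushforward $\pi_*(g_{\boldsymbol\tau}\,\omega_{CFL})$, $g_{\boldsymbol\tau}=\langle\boldsymbol\tau,\cdot\rangle^{-n}$, over the edge $\sigma:=\edge$, and to compute it from an explicit form of $\omega_{CFL}$. In a chart $U_{j,k}$ with $j\neq k$, writing $\zeta_l=z_l/z_j$ and $\omega_l=w_l/w_k$, the relations $\zeta_j\equiv1$, $\omega_k\equiv1$ collapse $(\sum_l d\zeta_l\wedge d\omega_l)^{n-1}$ to $(n-1)!\bigwedge_{l\neq j,k}(d\zeta_l\wedge d\omega_l)$, and wedging with the first factor forces the surviving term to carry $d\omega_j$; after reordering,
\[ \omega_{CFL}=\pm\frac{(n-1)!\,z_j^n w_k^n}{(2\pi i)^n}\Big(\bigwedge_{l\neq j,k}d\zeta_l\Big)\wedge\Big(\bigwedge_{l\neq k}d\omega_l\Big). \]
So $\omega_{CFL}$ carries exactly $n-1$ holomorphic $z$-differentials---pure base, since $z$ is constant along fibers---and $n$ holomorphic $w$-differentials, the only source of vertical (fiber) differentials once the fiber $W_{\Hom{z}}(\Omega)$ is parameterized by the simplex of Proposition \ref{prop:weaktangents}. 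Using the Euler-field identity $\bigwedge_{l\neq k}d\omega_l=w_k^{-(n+1)}\iota_E(dw_0\wedge\cdots\wedge dw_n)$ with $E=\sum_l w_l\partial_{w_l}$, I would convert the pullback of the $w$-part into an $(n+1)\times(n+1)$ determinant whose first column is $w$ and whose remaining columns are the coordinate derivatives of $w$; this is what eventually produces $\det_0(\Hom{w}^j)$.

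For the vanishing statement ($\dim\sigma=2n-k<n$, i.e.\ $k>n$) the bookkeeping of differentials suffices. The fiber has dimension $k-1$, so a nonzero vertical top-form needs $k-1$ of the $n$ factors $d\omega_l$ to become vertical; for $k>n+1$ this is impossible and $\Leray=0$ at once. In the borderline case $k=n+1$ all $n$ factors must go vertical, and the resulting determinant is $w_k^{-(n+1)}(\pm)\det[\Hom{w}^1\mid\cdots\mid\Hom{w}^{n+1}]$. Since each strong tangent $\Hom{w}^m=[-\partial\rho_{j_m}\cdot z:\partial\rho_{j_m}]$ has zeroth entry $-\sum_i\hat z_i(\Hom{w}^m)_i$, this matrix has its $0$-th row a fixed combination of the others and is singular; hence $\Leray=0$. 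Geometrically this is just the statement that the whole fiber lies in the dual hyperplane $\{\Hom{w}\mid\langle\Hom{w},\Hom{z}\rangle=0\}$.

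For $\dim\sigma=n$ (so $k=n$) exactly $n-1$ of the $d\omega_l$ go vertical while the remaining one, together with the $n-1$ factors $d\zeta_l$, fills the totally real base $\sigma$; taking $j=0$ so that $\zeta_l=\hat z_l$, the single leftover base direction is forced to be $\partial_{\hat z_k}$. Parameterizing the fiber by $\Hom{w}(t)=\sum_{m=1}^n t_m\Hom{w}^m$, $t\in\Delta^n$, I would use the decisive linearity $\langle\boldsymbol\tau,\Hom{w}(t)\rangle=\sum_m t_m\langle\boldsymbol\tau,\Hom{w}^m\rangle$. Writing $\partial_{\hat z_k}\Hom{w}=-w_k e_0+C$ splits the determinant of the previous paragraph into a \emph{principal} term, in which the base column is $-w_k e_0$, and \emph{correction} terms, in which it is the incident remainder $C$ coming from $\partial_z(\partial\rho_{j_m})$. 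The principal term is constant in $t$: its powers of $w_k$ cancel those in the prefactor and it contributes exactly $\det_0(\Hom{w}^j)\,z_0^n\,dz$, reducing the fiber integral to the Dirichlet--Feynman simplex identity
\[ \int_{\Delta^n}\frac{(n-1)!\;dt}{(\sum_m t_m c_m)^{n}}=\frac{1}{\prod_m c_m},\qquad c_m:=\langle\boldsymbol\tau,\Hom{w}^m\rangle, \]
whose $(n-1)!$ cancels the factorial above and yields the asserted $\frac{1}{(2\pi i)^n}\frac{\det_0(\Hom{w}^j)}{\prod_j\langle\boldsymbol\tau,\Hom{w}^j\rangle}z_0^n\,dz$.

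The main obstacle is disposing of the correction terms, which a priori carry second derivatives of the defining functions and would spoil this clean answer. The key observation I would use is that differentiating the incidence identity $\langle\Hom{w},\Hom{z}\rangle=0$ along $\sigma$ forces $\partial_z\Hom{w}^m$ to remain incident to $\Hom{z}$ apart from an explicit multiple of $e_0=[1:0:\cdots:0]$; the $e_0$-part is precisely the principal term, and the incident remainder $C$ is the correction. Every column of a correction determinant---the column $w$, the remainder $C$, and the fiber derivatives $\Hom{w}^p-\Hom{w}^n$---is then incident to $\Hom{z}$, so its $0$-th row is the same fixed combination $-\sum_i\hat z_i\,(\text{row }i)$ of the lower rows and the determinant vanishes identically, while the $e_0$ column of the principal term breaks exactly this dependence. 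With the correction terms gone, the residual work is routine: fixing the orientation of $\Delta^n$ so that the surviving minor is exactly $\det_0(\Hom{w}^j)$, and checking that the scalar prefactors collapse to $(2\pi i)^{-n}$ with no stray constant.
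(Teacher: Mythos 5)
Your proposal is correct, but it takes a genuinely different route from the paper in the key computational step. The paper's proof first rewrites $\omega_{CFL}$ (its eq.\ (\ref{eq:CFLcalc})) so that it carries all $n$ holomorphic $z$-differentials, $dz$, and only $n-1$ differentials in $w$; this representation is adapted to the fibration $\pi$, since the $dz$ factor already saturates the base. Two things then come for free that you have to work for: the vanishing on edges of dimension $<n$ is immediate (an $n$-form in $z$ pulled back to a base of dimension $<n$ vanishes), with no borderline case at $k=n+1$, and in the $\dim\sigma=n$ computation no horizontal $w$-derivatives ever arise, so there are no correction terms to dispose of. The paper then evaluates the fiber integral not in general coordinates but by invoking projective invariance of both sides (Lemma \ref{lem:CauchyProjInv} for the right-hand side, invariance of $\omega_{CFL}$ for the left) to reduce to the preferred configuration $z=(1,\dots,1)$, $\Hom{w}^j=[-1:0:\cdots:1:\cdots:0]$, where Lemma \ref{lem:simplexcalc} — which is exactly your Feynman-parameter identity up to orientation conventions — applies verbatim. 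Your route instead keeps the non-adapted chart representation of $\omega_{CFL}$ (with $n$ differentials in $w$), which forces (a) the extra rank argument when $k=n+1$, correctly resolved by observing that all columns of the vertical determinant are incident to $\Hom{z}$ and hence lie in an $n$-dimensional subspace, and (b) the splitting $\partial_{\hat z_k}\Hom{w}=-w_k e_0+C$ into a principal term and incident corrections, with the corrections killed by the same incidence/rank mechanism. That mechanism is sound and is the genuinely new ingredient relative to the paper: it buys you a computation valid in arbitrary affine coordinates with no appeal to the invariance reduction, at the cost of the sign/orientation/constant bookkeeping you defer, plus one small unaddressed chart point — the computation in $U_{0,k}$ needs $w_k\neq 0$ along the fiber, which is harmless (the powers of $w_k$ cancel in the principal term, and $\{t:w_k(t)=0\}$ has measure zero in the simplex because the matrix of strong tangents is invertible by complex transversality) but should be said.
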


\begin{proof}{Proof of Thm. \ref{thm:LerayKernel}} \\
Strict convexity implies that $\pi$ is a submersion with fiber $W_{\Hom{z}}(\Omega)$, so we may use Fubini to write
\[ \int_{I_S} f(\Hom{z}) \frac{1}{\pair{w}^n} \omega_{CFL}  = \int_{S} f(\Hom{z}) \left( \int_{\weak} \frac{1}{\pair{w}^n}  \tau_\sigma \right) \eta_\sigma \]
Projective invariance of $\Leray$ follows from projective invariance of $\omega_{CFL}$.
\end{proof}

For the proof of Prop. \ref{prop:LerayCalc}, we require a few lemmas.
\begin{lem}
\label{lem:CauchyProjInv}
Let $\Omega$ be a pseudoconvex, piece-wise smooth domain, $\Hom{z} \in \sigma := \sigma(j_1, \ldots, j_n)$ with strong tangents $\Hom{w}^j$, $j=1, \ldots, n$. The expression 
\[ \frac{\det_0(\Hom{w}^j)}{\prod_j \langle \boldsymbol{\tau}, \Hom{w}^j \rangle} z_0^n dz \]
defines a projectively invariant $O_S(n,0) \otimes \sO_{\Omega}(-n,0)$-valued $n$ form.
\end{lem}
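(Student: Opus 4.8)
The plan is to strip the expression down to the already-known invariant $n$-form $z_0^{n+1}\,dz$ multiplied by scalars whose transformation law I can pin down, and then check that every Jacobian factor cancels.

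First I would exploit the geometry of an $n$-dimensional edge. By transversality condition (b), at $\Hom{z}\in\sigma(j_1,\ldots,j_n)$ the $n$ complex tangent hyperplanes $\ell_{\Hom{w}^j}$ meet complex-transversely, so the covectors $\Hom{w}^1,\ldots,\Hom{w}^n$ are linearly independent and share the single common zero $\Hom{z}$, i.e.\ $\langle\Hom{w}^j,\Hom{z}\rangle=0$ for all $j$. The vector whose $k$-th entry is $(-1)^k\det_k(\Hom{w}^j)$ is the generalized cross product of the $\Hom{w}^j$, hence lies in their common kernel and must equal $\kappa\,\Hom{z}$ for a scalar $\kappa\neq 0$. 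In particular $\det_0(\Hom{w}^j)=\kappa z_0$, and the expression becomes
\[ \frac{\det_0(\Hom{w}^j)}{\prod_j\langle\boldsymbol{\tau},\Hom{w}^j\rangle}\,z_0^n\,dz \;=\; \kappa\,\frac{z_0^{n+1}\,dz}{\prod_j\langle\boldsymbol{\tau},\Hom{w}^j\rangle}. \]
Since $z_0^{n+1}\,dz$ is the distinguished invariant $\sO_{\Ps^n}(n+1,0)$-valued $n$-form, the whole analysis reduces to tracking $\kappa$ and the product of pairings.

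Next I would read off the bundle type. The ratio $\det_0(\Hom{w}^j)/\prod_j\langle\boldsymbol{\tau},\Hom{w}^j\rangle$ is homogeneous of degree $0$ in each $\Hom{w}^j$, so it does not depend on the chosen homogeneous representatives of the strong tangents; the residual sign from reordering the $\Hom{w}^j$ (a transposition flips $\det_0$ but fixes the symmetric product) is pinned down by the proper enumeration compatible with the orientation implicit in $dz$. Under $\Hom{z}\mapsto\lambda\Hom{z}$ the point of $S$ and its strong tangents are unchanged while $z_0^n\,dz$ scales by $\lambda^n$, giving the $O_S(n,0)$ factor; under $\boldsymbol{\tau}\mapsto\lambda\boldsymbol{\tau}$ the denominator scales by $\lambda^n$, giving degree $-n$. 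Because each $\Hom{w}^j$ is a strong tangent, $\ell_{\Hom{w}^j}$ is a supporting hyperplane avoiding $\Omega$ (Lemma \ref{lem:TCconvex} with $\C$-convexity), so each $\langle\boldsymbol{\tau},\Hom{w}^j\rangle$ is a nonvanishing holomorphic function on $\Omega$; thus the $\boldsymbol{\tau}$-dependence is a genuine nonvanishing holomorphic section of $\sO_\Omega(-n,0)$.

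The heart of the proof is projective invariance. For $T$ represented by $M$ with $\det M=1$ I would use $\Hom{z}\mapsto M\Hom{z}$, $\boldsymbol{\tau}\mapsto M\boldsymbol{\tau}$, the dual action $\Hom{w}^j\mapsto M^{-t}\Hom{w}^j$ on strong tangents (so tangent hyperplanes map to tangent hyperplanes, since $T^{-t}$ preserves the incidence locus), and the given law $T^*dz=J^{-n-1}\,dz$ with $J:=(M\Hom{z})_0/z_0$. Three computations close the argument: (i) $\langle M\boldsymbol{\tau},M^{-t}\Hom{w}^j\rangle=\langle\boldsymbol{\tau},\Hom{w}^j\rangle$, so the product of pairings is invariant; (ii) $((M\Hom{z})_0)^n=J^n z_0^n$; and (iii) the transformation of $\det_0$, which I expect to be the main obstacle. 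Indeed the deleted-coordinate minor $\det_0$ has no simple transformation under $M^{-t}$ on its own; the clean factor only emerges from the cross-product law, namely that under $\Hom{w}^j\mapsto N\Hom{w}^j$ the cross product transforms by $\det(N)\,N^{-t}$ (expand $\det[u,N\Hom{w}^1,\ldots,N\Hom{w}^n]$). With $N=M^{-t}$ and $\det N=1$ this is just $M$, so $\kappa\Hom{z}\mapsto\kappa M\Hom{z}$ and the $0$-component gives $\det_0(M^{-t}\Hom{w}^j)=J\,\det_0(\Hom{w}^j)$. Assembling (i)--(iii) with the form law, the transformed expression carries the factor $J\cdot J^{n}\cdot J^{-n-1}=1$ and therefore equals the original, proving invariance; once this identity is secured the remaining bookkeeping of powers of $J$ is routine.
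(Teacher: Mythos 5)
Your proof is correct, but it reaches the key identity by a genuinely different route than the paper. Both arguments reduce to the same three ingredients: invariance of the pairing $\langle M\boldsymbol{\tau}, M^{-t}\Hom{w}^j\rangle = \langle \boldsymbol{\tau},\Hom{w}^j\rangle$, the known law $T^*dz = J^{-n-1}dz$, and the transformation of ${\det}_0$ under the dual action. For this last (and only nontrivial) ingredient, the paper argues generator by generator in $SL(n+1,\C)$ --- linear maps, affine shifts, and inversions --- and the hard inversion case is handled by Cramer's rule applied to $W\cdot z = -\boldsymbol{1}$, yielding $z_0{\det}_0(\Hom{w}^j) = -z_1{\det}_1(\Hom{w}^j)$. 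You instead prove the law ${\det}_0(M^{-t}\Hom{w}^j) = J\,{\det}_0(\Hom{w}^j)$ for \emph{arbitrary} unimodular $M$ in one stroke: since the hyperplanes $\ell_{\Hom{w}^j}$ all pass through $\Hom{z}$ and are independent by complex transversality, the vector of signed minors (generalized cross product) equals $\kappa\Hom{z}$, and the equivariance $\times(N\Hom{w}^1,\ldots,N\Hom{w}^n) = \det(N)N^{-t}\times(\Hom{w}^1,\ldots,\Hom{w}^n)$ with $N = M^{-t}$ gives the factor $J$ immediately; your factor count $J\cdot J^n\cdot J^{-n-1}=1$ then closes the argument exactly as the paper's case computations do. Note that your proportionality statement $\times(\Hom{w}^j)=\kappa\Hom{z}$ is precisely the content of the paper's Cramer's rule step, but where the paper deploys it only inside Case 3, you make it the organizing principle; this buys a coordinate-free argument with no case analysis, and your rewriting of the expression as $\kappa\, z_0^{n+1}dz / \prod_j\langle\boldsymbol{\tau},\Hom{w}^j\rangle$ additionally makes the bundle-type claim transparent from the known invariance of $z_0^{n+1}dz$. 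The paper's approach, by contrast, is more elementary in that it needs nothing beyond direct matrix computation on each generator.
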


\begin{proof}
Since the strong tangents are mapped to each other by projective transformations which preserve $\langle, \rangle$, the expression $\frac{1}{\prod_j \langle \boldsymbol{\tau}, \Hom{w}^j \rangle}$ is invariant. The $n$-form $dz$ transforms like a section of $\sO_{S}(n+1,0)$, so it suffices to check that $\det_0(\Hom{w}^j)$ transforms like a section of $O_{S}(-1,0)$.
Let $T$ be a projective transformation with associated matrix $M$. It suffices to check invariance for $T$ in a generating set. \\
	Case 1: $T$ is a linear transformation in $z$, so $M$ is of the form 
	$$ \left(\begin{array}{@{}c|c@{}}
		\det(N)^{\frac{-1}{n+1}}
		& 0 \\
		\hline
	0 & \det(N)^{\frac{-1}{n+1}} N
	\end{array}\right), \ \ N \in GL_n(\C)$$
	Recalling that the induced transformation on the $\Hom{w}^j$ is $N^{-t}$,
	\begin{align*}
		T^*\left( \frac{\det_0(\Hom{w}^j)}{\prod_j \langle \boldsymbol{\tau}, \Hom{w}^j \rangle } dz \right) & = \frac{\det_0(T^{-t} \circ \Hom{w}^j)}{\prod_j \langle T^{-t}\Hom{w}^j, T\boldsymbol{\tau} \rangle } \det(N) dz \\
		& = \det(N)^{\frac{n}{n+1}}\frac{\det_0(\Hom{w}^j)}{\prod_j \langle \Hom{w}^j, \boldsymbol{\tau} \rangle } dz.
	\end{align*}
Case 2: $T$ is an affine shift,
\[ T(\Hom{z}) = [z_0:\ldots: z_{j-1}:z_j + \lambda z_0: z_{j+1} : \ldots z_n]. \]
Case 1 allows us to assume that $j = 1$. The dual transformation is given by 
\[ T^{-t}(\Hom{w}) = [w_0 - \lambda w_j: w_1 : \ldots : w_n] \]
so $T^*\det_0(\Hom{w}^j) = \det_0(T^{-t} \circ \Hom{w}^j) = \det_0(\Hom{w}^j)$. \\
Case 3: $T$ is an inversion, 
\[ T(\Hom{z}) = [z_0 + \lambda z_j: z_1 : \ldots : z_n]. \]
\indent Again, assume that $j = 1$. The dual transformation is given by 
\[ T^{-t}(\Hom{w}) = [w_0: w_1 - \lambda w_0: w_2 : \ldots : w_n] \]
Pulling back,
	\[ T^*({\det}_0(\Hom{w}^j))  = {\det}_0({\bf w}_j) - \lambda {\det}_1(\Hom{w}^j) \]
Let $W$ denote the matrix whose rows are the last $n$-coordinates of $\Hom{w}^j$ in affine coordinates, $\frac{w_k^j}{w^j_0}, j = 1, \ldots, n$. We have the matrix equation
\[ W \cdot z = -\boldsymbol{1} \]
An application of Cramer's rule gives $z_1 \det_0(W) = - z_0 \det_1(W)$, and by clearing denominators we get $z_0 \det_0(\Hom{w}^j) = -z_1 \det_1(\Hom{w}^j)$. It follows that 
\[ T^*({\det}_0(\Hom{w}^j)) = (1+\lambda \frac{z_1}{z_0}){\det}_0(\Hom{w}^j) \]
completing the proof.
\end{proof}

\begin{lem}{Simplex Calculation} \\
	\label{lem:simplexcalc}
	Let $e_j$ denote the $j^{th}$ standard basis vector, $\tau \in \C^n$. Suppose $w_j = w_j(t), t \in \Delta^n$ are functions satisfying $\sum w_i = 1$ and that \\
	$w_j(e_k) = \begin{cases}
		1 & j = k \\
		0 & j \neq k
	\end{cases}$. Then 
	\[  I(w_1, \ldots, w_n) := \int_{\Delta^n} \frac{1}{(1-\langle \tau, w \rangle)^n} dw_{[n]} = \frac{(-1)^{n}}{(n-1)!}\prod_{j=1}^{n} \frac{1}{1-\tau_j}. \]
\end{lem}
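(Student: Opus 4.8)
The plan is to exploit the fact that the integrand is a top-degree form on the affine hyperplane $H = \{w : \sum_i w_i = 1\}$. Since a top-degree form on the $(n-1)$-dimensional space $H$ is automatically closed (away from the polar locus $\{\langle \tau, w\rangle = 1\}$), the value of $I$ will depend only on the boundary data of the map $w \colon \Delta^n \to H$. This lets me first reduce to the case where $w$ is the affine (barycentric) parameterization $a(t) = t$, and then evaluate the resulting integral over the standard simplex, where it becomes a classical Feynman/Dirichlet integral.

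For the reduction, observe that the hypothesis $\sum_i w_i \equiv 1$ says the image of $w$ lies in $H$, while the vertex conditions $w_j(e_k) = \delta_{jk}$ say that $w$ sends the $j$-th vertex of $\Delta^n$ to the same point of $H$ as the affine map $a$. I would then form the straight-line homotopy $w_s = (1-s)a + s\,w$, which stays in $H$ (as $\sum_i w_s{}_i \equiv 1$) and preserves the vertex conditions. Writing $\Phi := (1-\langle\tau,w\rangle)^{-n}\,dw_{[n]}$ for the pulled-back integrand, closedness of $\Phi$ on $H \setminus \{\langle\tau,w\rangle = 1\}$ together with homotopy invariance of integrals of closed forms yields $I(w) = I(a)$ up to a term supported on $\partial\Delta^n \times [0,1]$; that boundary term is handled by induction on $n$, since the restriction of $w$ to each facet is an instance of the lemma one dimension lower. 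The point requiring care, and the step I expect to be the main obstacle, is keeping the homotopy off the polar locus $\{\langle\tau,w\rangle = 1\}$ so that Stokes may be applied; this holds for $\tau$ in the range relevant to the application (where this is exactly the condition that the tangent hyperplanes avoid $\tau$, as in Proposition \ref{prop:weaktangents}), and the identity then extends to all admissible $\tau$ by analyticity.

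Once reduced to the affine case, I would use $\sum_i w_i = 1$ to rewrite the denominator as
\[ 1 - \langle\tau,w\rangle = \sum_{i=1}^n (1-\tau_i)\,w_i = \sum_{i=1}^n A_i w_i, \qquad A_i := 1 - \tau_i, \]
so that $I(a) = \int_{\Delta^n} \big(\sum_i A_i w_i\big)^{-n}\,dw_{[n]}$. This is the standard simplex integral, which I would evaluate by induction on $n$: integrating out $w_{n-1}$ over its range by the fundamental theorem of calculus lowers the exponent to $n-1$ and produces a difference of two $(n-1)$-variable simplex integrands, one carrying the weight $A_{n-1}$ and one carrying $A_n$ in the last slot. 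Applying the inductive hypothesis to each and combining the difference with the prefactor $\tfrac{1}{(n-1)(A_{n-1}-A_n)}$ telescopes to $\tfrac{1}{(n-1)!}\prod_i A_i^{-1}$, reproducing both the factorial and the full product.

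Finally I would reconcile orientations. The computation above produces the positively oriented simplex volume, giving $\tfrac{1}{(n-1)!}\prod_j (1-\tau_j)^{-1}$; comparing this with the orientation that $\Delta^n$ actually inherits — from the ordering of its vertices and the decomposition $\omega_{CFL} = \pi^*\eta_\sigma \wedge \tau_\sigma$ used to set up the push-forward — contributes the overall factor $(-1)^n$ (the Jacobian of passing from the ordered edge frame $e_{k+1}-e_1$ to the coordinate frame $dw_1\wedge\cdots\wedge dw_{n-1}$). This yields the claimed value $\tfrac{(-1)^n}{(n-1)!}\prod_j (1-\tau_j)^{-1}$, with the sign being purely a matter of matching the paper's orientation convention.
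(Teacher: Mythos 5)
Your steps 2--3 (the affine evaluation and the telescoping) are fine: integrating out $w_{n-1}$, picking up the prefactor $\tfrac{1}{(n-1)(A_{n-1}-A_n)}$, and applying the identity $\prod_{j\neq n}(1-\tau_j)^{-1}-\prod_{j\neq n-1}(1-\tau_j)^{-1}=(\tau_{n-1}-\tau_n)\prod_{j}(1-\tau_j)^{-1}$ is exactly the paper's own induction, which runs Stokes with the explicit primitive $\eta=\tfrac{(-1)^{n-1}}{(n-1)(\tau_{n-1}-\tau_n)}(1-\langle\tau,w\rangle)^{-(n-1)}dw_1\wedge\cdots\wedge dw_{n-2}$. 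The genuine gap is in your step 1, the homotopy reduction $I(w)=I(a)$. Closedness of $\Phi$ gives homotopy invariance only \emph{relative to the boundary}, and $w$ and $a$ are assumed to agree only at the $n$ vertices, not on $\partial\Delta^n$. The leftover term is $\sum_i\int_{F_i\times[0,1]}\Psi^*\Phi$, where $F_i:=\Delta^n\cap\{t_i=0\}$ and $\Psi(t,s)=(1-s)a(t)+sw(t)$; this is a flux of the degree-$n$ integrand through a facet times an interval, not an integral of the degree-$(n-1)$ integrand over a facet, so it is \emph{not} an instance of the lemma one dimension lower, and knowing the lower-dimensional values of $I$ cannot make it vanish.

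This gap cannot be closed as stated, because under the literal hypotheses the lemma is false. Take $n=3$, $\tau=0$, and $w(t)=(t_1,\ t_2+i\epsilon t_1t_2,\ t_3-i\epsilon t_1t_2)$: then $\sum_j w_j=1$ and the vertex conditions hold, but $w^*(dw_1\wedge dw_2)=(1+i\epsilon t_1)\,dt_1\wedge dt_2$, so $I(w)-I(a)=\pm i\epsilon\int_0^1 t_1(1-t_1)\,dt_1=\pm i\epsilon/6\neq 0$, while the right-hand side of the lemma does not depend on $w$. What is missing --- and what the paper's proof also uses silently when it asserts that its primitive $\eta$ restricts to zero on the facets $\partial\Delta^n_i$ for $i\leq n-2$ --- is the condition that $w_i\equiv 0$ on the facet $\{t_i=0\}$; this holds in the application, since by Prop.~\ref{prop:weaktangents} the relevant parameterization is $w_i=t_i/\sum_l t_l$. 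Once that hypothesis is added, your straight-line homotopy does work, but for a reason different from the one you give: $\Psi$ then maps $F_i\times[0,1]$ into the slice $\{w_i=0\}\cap\{\textstyle\sum_j w_j=1\}$, a complex affine space of dimension $n-2$, and the holomorphic $(n-1)$-form $\Phi$ pulls back to zero there for type/degree reasons; hence every boundary flux vanishes identically, no induction is needed, and $I(w)=I(a)$. (Similarly, $\Phi$ is closed on $\{\sum_j w_j=1\}$ because it is a holomorphic form of top \emph{holomorphic} degree there; it is not top degree as a real form, so your stated reason needs this adjustment too.) With these repairs your route is correct and is a genuine alternative to the paper's argument, which never reduces to the affine case but instead applies Stokes with an explicit primitive directly to a general admissible $w$.
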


\begin{proof}
	The proof is induction on $n$. For $n = 2$, this reduces to calculating
	\[ \int_{t_1=0}^{t_1=1} \frac{1}{(1 - (w_1\tau_1 + w_2\tau_2))^2} dw_1. \]
	Keeping in mind that $w_2 = 1 - w_1$, we get
	\begin{align*}
		\left[\frac{-1}{(1-(w_1\tau_1 + w_2\tau_2))} \frac{-1}{\tau_1 - \tau_2} \right]_{t_1=0}^{t_1=1} = \frac{1}{(1-\tau_1)(1-\tau_2)}.
	\end{align*}
	For the induction step, we will use Stoke's theorem. Set $\partial \Delta_i^n := \Delta^n \cap \{t_i = 0\}$ oriented as the boundary of $\Delta^n$, and let $\varphi$ denote the integrand in the lemma. Thinking of $w_n$ as a function of the other $w_j$, we have $\frac{\partial w_n}{ \partial w_{n-1}} = - 1$. Define the primitive for $\varphi$ 
	\begin{align*}
		\eta := \frac{(-1)^{n-1}}{(n-1)(\tau_{n-1} - \tau_n )} \frac{1}{(1 - \langle \tau, w \rangle)^{n-1}} dw_1 \wedge \ldots \wedge dw_{n-2}.
	\end{align*}
	Note that $\eta$ is only non-zero along $\partial \Delta_i^n$ for $i = n-1, n$. Keeping in mind the orientation, we have
	\begin{align*}
		\int_{\partial \Delta_{n-1}^n} \eta & =  \frac{(-1)^{n-2}}{(n-1)} \frac{1}{\tau_{n-1} -\tau_n} (-1)^{n-2} I(w_1, \ldots, w_{n-2}, w_n) \\
		\int_{\partial \Delta_{n}^n} \eta & =  \frac{(-1)^{n-2}}{(n-1)} \frac{1}{\tau_{n-1} -\tau_n} (-1)^{n-1} I(w_1, \ldots, w_{n-2}, w_{n-1}).
	\end{align*}
	By Stoke's
	\begin{align*}
		\int_{\Delta^n} \varphi = \frac{1}{(n-1)(\tau_{n-1} - \tau_n)}\left( I(w_1, \ldots, w_{n-2}, w_n) - I(w_1, \ldots, w_{n-2}, w_{n-1}) \right).
	\end{align*}
	From induction, along with the identity
	\begin{align*}
		\prod_{j \neq n} \frac{1}{1- \tau_j} - \prod_{j \neq n-1} \frac{1}{1- \tau_j} = \frac{\tau_{n-1} - \tau_n}{\prod_{j=1}^n 1 - \tau_j}
	\end{align*}
	we are done.
\end{proof}

\begin{proof}{Proof of Prop. \ref{prop:LerayCalc}} \\
First we establish the following equality on $U := \{ z_0 \neq 0, z_n \neq 0, w_0 \neq 0\} \subset \Ps^n \times (\Ps^n)^*$
\begin{align}
\label{eq:CFLcalc}
\omega_{CFL} = z_0^n w_0^n \frac{(-1)^{\frac{n^2-n-2}{2}}(n-1)!}{(2\pi i)^n} \frac{1}{\hat{z}_n} dw_{[n]} \wedge dz
\end{align}
\par To see this, first make the substitution $\frac{1}{\hat{z}_n} = \hat{w}_n + \sum_{j=1}^{n-1} \frac{\hat{w}_j\hat{z}_j}{\hat{z}_n}$ into the right hand side. Then observe by differentiating the relation $z \cdot w = 1$, that $\hat{z}_jd\hat{w}_j = - \hat{z}_nd\hat{w}_n +$ terms that involve $d\hat{w}_j, j\neq n$ and $d\hat{z}_k$. These extra terms cancel upon substitution. The calculations described are below.
\begin{align*}
	\frac{1}{\hat{z}_n} & dw_{[n]} \wedge dz = (\hat{w}_n + \sum_{j=1}^{n-1} \frac{\hat{w}_j\hat{z}_j}{\hat{z}_n}) dw_{[n]} \wedge dz \\
	& = \hat{w}_n dw_{[n]} \wedge dz + \sum_{j=1}^{n-1}\hat{w}_j(-1)^{n-j} dw_{[j]} \wedge dz \\
	& = (-1)^n \sum_{j=1}^{n} (-1)^j \hat{w}_j dw_{[j]} \wedge dz
\end{align*}
\par To see that eq. (\ref{eq:CFLcalc}) agrees with the Universal CFL form, expand \\$(-1)^n \omega_{CFL}(\Hom{w}, \Hom{z})$ in the chart $U_{0,0} := \{ z_0 \neq 0, w_0 \neq 0\}$.
\par With this expression, it's clear that $\omega_{CFL}$ vanishes on edges of dimension $< n$ since we have $n$ differentials in $z$, taking values in an edge of dimension $<n$.
\par For the second calculation, use the projective invariance of both quantities to check the equality in a preferred set of coordinates. We will choose $z = (1, \ldots, 1)$, $\Hom{w}^j = [-1: 0 : \ldots : 0 : 1 : 0 : \ldots 0 ]$, where the $1$ is $j^{th}$ coordinate. Applying the previous calculation of $\omega_{CFL}$ along with Prop. \ref{prop:weaktangents} and Lemma \ref{lem:simplexcalc}, we have
\begin{align*}
\Leray & = \int_{W_{\Hom{z}}(\Omega)} \frac{1}{\pair{w}^n} \omega_{CFL} \\
& = z_0^n \frac{(-1)^{\frac{n^2-n-2}{2}}(n-1)!}{(2\pi i)^n} \left( \int_{\Delta^n} \tau_0^{-n} \frac{1}{(1 - \langle w, \tau \rangle)^n} dw_{[n]}\right) dz \\
& = z_0^n \frac{(-1)^{\frac{n^2+n-2}{2}}}{(2\pi i)^n} \tau_0^{-n} \frac{1}{\prod_j (1 - \tau_j)} dz \\
\end{align*}
The proof is complete up observing that, up to enumerating the strong tangents, this is exactly
\[ \frac{1}{(2 \pi i)^n}\frac{\det_0(\Hom{w}^j)}{\prod_j \langle \boldsymbol{\tau}, \Hom{w}^j \rangle} z_0^n dz \]
\end{proof}
Of course when $\sigma$ is dimension $2n-1$, $\Leray$ coincides with the usual Leray kernel. Now suppose $\gamma_j$ are a collection of analytic convex curves in $\C$ containing the origin, and $\Omega = \mathring{\gamma_1} \times \ldots \times \mathring{\gamma_n}$. It follows that the smooth portion of $S$ is \emph{Levi-flat}, meaning the defining functions $\rho_j = f_j + \overline{f_j}$ for $f_j$ holomorphic. It follows from Prop. \ref{def:weaktangents} that $\omega_{CFL}$ will vanish on edges of dimension $2n-k > n$, since the differentials $d\hat{w}_j$ are functions of holomorphic differentials in $z$ and at most $k-1$ other parameters. Thus we are left with the portion of $\Leray$ with $\Hom{z} \in \gamma_1 \times \ldots \times \gamma_n$. The strong tangents are given by $\Hom{w}^j = [-1: 0 : \ldots : 0 : \frac{1}{\hat{z}_j} : 0 \ldots 0]$, and we see that Thm. \ref{thm:LerayKernel} recovers the multi-variable Cauchy formula.

\section{An Invariant Hardy Space}
\par To construct a Hardy space, we would like a measure on $S$ such that a holomorphic function can be constructed by its $L^2$-boundary values with respect to this measure. From Thm \ref{thm:LerayKernel}, such a measure must give mass to edges of dimension $n$ to $2n-1$. Following \cite{barrett_2015}, we would also like the norm on this Hardy space to be projectively invariant. In this section, we produce such a measure when $n=2$.
\par When $n=2$, we need only consider edges of dimension two and three. For three-dimensional edges, there are several potential choices of measure, say Fefferman's measure or Barrett's preferred measure. Both measures are non-negative when $\Omega$ is locally strictly $\C$-convex, positive when $\Omega$ is strongly $\C$-convex, and satisfy the correct transformation law. We want to construct an analogous measure on edges of dimension $2$. To this end, we will say that a piece-wise smooth pseudoconvex domain is \emph{locally strictly $\C$-convex} if for each $\Hom{z} \in S$ there is a a neighborhood $U \ni \Hom{z}$ such that for all $\Hom{w} \in \weak$, $\ell_{\Hom{w}} \cap U \cap \overline{\Omega} = \{\Hom{z}\}$.
\par A first guess of a projectively invariant measure would be $|z_0^3dz|$. This is an $O_{\Ps^2}(3/2,3/2)$-valued 2-form, and so defines a norm on elements of $O_S(-3/2,0)$. However, elements of the Hardy space are going to be sections of $O_{S}(-2,0)$, and so we require a projectively invariant $O_S(2,2)$-valued 2-form. Fixing this requires finding a projectively invariant section $\eta \in O_{S}(j,j), \ j \neq 0$ which is non-negative when $\Omega$ is locally strictly $\C$-convex. The invariant measure will be given by $\mu := \eta^{\frac{1}{2j}} |z_0^3dz|$.
\par 
\begin{prop}
\label{prop:InvariantSection}
Let $\Omega \subset \Ps^2$ be a piece-wise smooth, pseudoconvex domain with all boundary hypersurfaces strongly $\C$-convex, and let $\sigma$ be a 2-dimensional edge. Then there is a projectively invariant section $\eta$ of \\ 
$O_{\sigma}(3/2,3/2)$ which is non-negative at points where $\Omega$ is locally strictly $\C$-convex. Furthermore, if $\eta > 0$ at a point $\Hom{z} \in \sigma$ then $\Omega$ is locally strictly $\C$-convex at $\Hom{z}$.
\end{prop}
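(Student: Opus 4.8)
The plan is to reduce the statement to a second-order computation on the real tangent plane $T_{\Hom{z}}\sigma$ and to build $\eta$ from that data. First I would normalize coordinates: center $\Hom{z}$ at the origin and use the affine/projective freedom to put the two complex tangent lines as the coordinate axes, so that $\hat z_1 = \alpha_1\cdot z$, $\hat z_2 = \alpha_2\cdot z$, the strong tangents are $\Hom{w}^1 = [0:1:0]$, $\Hom{w}^2 = [0:0:1]$, and $T_{\Hom{z}}\sigma = \{\,\mathrm{Im}\,\hat z_1 = \mathrm{Im}\,\hat z_2 = 0\,\}$. Expanding the defining functions as
\[ \rho_1 = -\mathrm{Im}(\hat z_1) + Q_1 + \cdots, \qquad \rho_2 = -\mathrm{Im}(\hat z_2) + Q_2 + \cdots, \]
I would record strong $\C$-convexity of $H_l$ as the statement that the Levi form dominates the holomorphic Hessian along the respective complex tangent direction; concretely this forces the real quadratic form $Q_1$ to be positive on the $\hat z_2$-axis and $Q_2$ positive on the $\hat z_1$-axis of $T_{\Hom{z}}\sigma$.

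The key reduction analyzes, for each weak tangent $\Hom{w}(t)$ from Prop.~\ref{prop:weaktangents}, the trace of $\rho_1,\rho_2$ on the line $\ell_{\Hom{w}(t)}$, where by Lemma~\ref{lem:TCconvex} we may take $\Omega = \{\rho_1<0\}\cap\{\rho_2<0\}$ near $\Hom{z}$. Parameterizing $\ell_{\Hom{w}(t)}$ by $\zeta\mapsto\zeta v(t)$ with $\Hom{w}(t)\cdot v(t)=0$, the first-order parts of $\rho_1|_\ell$ and $\rho_2|_\ell$ are real multiples of one another with opposite signs (because $t_1(\alpha_1\cdot v)+t_2(\alpha_2\cdot v)=0$ with $t_l>0$), so they cancel exactly along the single real direction $v(t)=(t_2,-t_1)$, which lies in $T_{\Hom{z}}\sigma$. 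A scaling argument shows that off this direction the opposing first-order terms keep $\Omega\cap\ell_{\Hom{w}(t)}$ empty near $\Hom{z}$, so the question is decided at second order along $v(t)$. As $t$ runs over the open simplex, the directions $v(t)$ sweep out the open ``mixed'' cone $\{x_1x_2<0\}\subset T_{\Hom{z}}\sigma$, with the two strong-tangent endpoints corresponding to the coordinate axes. Hence \emph{$\Omega$ is locally strictly $\C$-convex at $\Hom{z}$ if and only if the real quadratic forms $q_l := Q_l|_{T_{\Hom{z}}\sigma}$ are never simultaneously negative on $\{x_1x_2<0\}$}, the boundary rays being handled by the strong $\C$-convexity of $H_1,H_2$ noted above.

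With this reduction in hand I would define $\eta$ as the explicit real algebraic combination of the coefficients of $q_1$ and $q_2$ (equivalently, of the strong tangents $\Hom{w}^1,\Hom{w}^2$ and the second-order jets of $\rho_1,\rho_2$) whose sign governs whether the cone $\{q_1<0\}\cap\{q_2<0\}$ meets the mixed cone; its zero locus is precisely the transition configuration in which some weak-tangent line acquires degenerate second-order contact with $\partial\Omega$. I would then check that $\eta$ is independent of the chosen defining functions (replacing $\rho_l$ by $u_l\rho_l$ with $u_l>0$) and that it transforms as a section of $O_\sigma(3/2,3/2)$, verifying projective invariance on a generating set of transformations — linear maps, affine shifts, and inversions — exactly as in the proof of Lemma~\ref{lem:CauchyProjInv}. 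Granting the reduction, local strict $\C$-convexity then yields $\eta\ge 0$, while $\eta>0$ rules out simultaneous negativity on the closed mixed cone and hence gives local strict $\C$-convexity.

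The main obstacle is twofold. First, isolating the correct invariant: the sign must encode a uniform statement over the \emph{entire} arc of weak tangents rather than a single evaluation, and one must arrange the algebra so that this uniform condition collapses to the sign of one quantity of the prescribed bundle weight $(3/2,3/2)$. Second, the converse direction requires promoting second-order positivity into an actual neighborhood on which every weak-tangent line meets $\overline{\Omega}$ only at $\Hom{z}$; this means controlling the higher-order remainders in the expansions of $\rho_1,\rho_2$ uniformly in $t$, and with extra care as $t$ approaches the strong-tangent endpoints, where the first-order cancellation degenerates and the estimates must be matched against the strong $\C$-convexity of the individual hypersurfaces.
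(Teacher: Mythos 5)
Your plan follows the paper's general strategy (normalize at a point of $\sigma$, expand to second order, test the arc of weak tangents from Prop.~\ref{prop:weaktangents}), but the central reduction is false, and both directions of the proposition would fail if built on it. The flawed step is ``the question is decided at second order along $v(t)$,'' leading to the criterion that $q_1,q_2$ are never simultaneously negative on the mixed cone. You test membership in $\overline{\Omega}$ only at real points $\zeta v(t)$, $\zeta\in\mathbb{R}$, where both linear parts vanish, so that exclusion requires the \emph{pair} of conditions on $Q_1,Q_2$; and you dismiss non-real $\zeta$ by a scaling argument. But a point of $\ell_{\Hom{w}(t)}$ in $\overline{\Omega}$ only needs $\mathrm{Im}\,\zeta = O(|\zeta|^2)$, and a displacement $\mathrm{Im}\,\zeta=\epsilon$ of exactly this size enters the two defining functions at first order \emph{with opposite signs}: in your normalization $\rho_1 = -t_2\epsilon + Q_1+\cdots$, $\rho_2 = t_1\epsilon + Q_2+\cdots$. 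So first order can be traded against second order: a suitable $\epsilon$ exists if and only if $Q_1/t_2 \le -Q_2/t_1$, i.e.\ if and only if the \emph{weighted} combination $t_1Q_1+t_2Q_2$ is $\le 0$ along $v(t)$ --- a strictly weaker condition than simultaneous negativity. This weighted combination (in the paper's notation, $ty_1+sy_2$ restricted to $\{tx_1+sx_2=0\}$) is exactly the quantity the paper's proof analyzes; after normalizing $a_1=a_2=0$, $c_1=c_2=-1$ it produces the family of inequalities $-tb_1-sb_2 \le (t^3+s^3)/(st)$ for all $t\in[0,1]$, which is then packaged via the Legendre transform into $\kappa$ and $\eta = c_1c_2\kappa\,|z_0|^3$. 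Your criterion only detects the failure mode where both forms go negative at one point, and misses the failures caused by one form being very negative against the other being mildly positive.

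The two conditions genuinely differ. In your convention take, near the origin, $\rho_1 = -y_1 + |\hat z_2|^2 + 10x_1x_2$ and $\rho_2 = -y_2 + |\hat z_1|^2$, so $\Omega=\{\rho_1<0,\rho_2<0\}$ is pseudoconvex, both hypersurfaces are strongly $\C$-convex, and $q_1 = x_2^2+10x_1x_2$, $q_2 = x_1^2$. Since $q_2\ge 0$ everywhere, your criterion holds (vacuously) on the closed mixed cone, and your construction would output $\eta>0$ and assert local strict $\C$-convexity. Yet $[0:1:1]$ is a weak tangent, and the point $\zeta\cdot(\tfrac12,-\tfrac12)$ with $\zeta = u - iu^2$ lies on $\ell_{[0:1:1]}=\{\hat z_1+\hat z_2=0\}$ and satisfies $\rho_1 = -\tfrac74 u^2 + \tfrac14 u^4 <0$ and $\rho_2 = -\tfrac14 u^2 + \tfrac14 u^4<0$ for all small real $u\neq 0$: the weak tangent passes through the \emph{interior} of $\Omega$ arbitrarily close to the edge point, so $\Omega$ is not locally strictly $\C$-convex there. (Correspondingly, the weighted form is $t_1^3+t_2^3-10t_1^2t_2$, which is $-1$ at $t_1=t_2=\tfrac12$; the paper's inequality fails since it would require $5\le 1$, so the paper's $\eta$ is negative here, as it should be.) Beyond this, the actual construction of $\eta$ --- which in the paper requires exhausting the residual coordinate freedom $I_\lambda, S_r, W$ to normalize $a_j, c_j$ and then encoding a condition uniform in $t$ via the Legendre transform --- is only asserted in your proposal; but the fatal problem is upstream, in the characterization of local strict $\C$-convexity itself.
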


\begin{proof}
First we describe how to construct $\eta$. Let $H_1, H_2$ be boundary hypersurfaces with $\sigma = H_1 \cap H_2$. Given $\Hom{z} \in \sigma$, make a change of coordinates so that $z = 0$ and the tangent cone of $\Omega$ at $0$ coincides with $\{Im(\hat{z}_1) < 0, Im(\hat{z}_2) < 0\}$. Writing $\hat{z}_j = x_j + iy_j$, on the hypersurface $H_1$ we have
\begin{align}
\label{eq:H1}
y_1 = \alpha_1|\hat{z}_2|^2 + Re(\beta_1 \hat{z}_2^2) + b_1x_1x_2 + \gamma_1x_1y_2 +a_1^2x_1^2 + h.o.t.
\end{align}
and the same on $H_2$
\begin{align}
\label{eq:H2}
y_2 = \alpha_2|\hat{z}_1|^2 + Re(\beta_2 \hat{z}_1^2) + b_2x_1x_2 + \gamma_2x_2y_1 +a_2^2x_2^2 + h.o.t.
\end{align}
Strong $\C$-convexity of each hypersurface implies $\alpha_j + |\beta_j| < 0$. Along $\sigma$, we can expand the $y_j$ in terms of the $x_j$
\begin{align*}
    y_1 & = a_1x_1^2 + b_1x_1x_2 + c_1x_2^2 + h.o.t.\\
    y_2 & = a_2x_2^2 + b_2x_1x_2 + c_2x_1^2 + h.o.t.
\end{align*}
It follows that $c_j = \alpha_j + Re(\beta_j) < 0$. This choice of coordinates is specified up to the subgroup of transformations generated by 
\begin{align*}
    I_\lambda := \begin{pmatrix}
    1 & 0 & 0 \\
    \lambda & 1 & 0 \\
    0 & 0 & 1 \end{pmatrix} \ \lambda \in \C, \ S_r := \begin{pmatrix}
    1 & 0 & 0 \\ 0 & r & 0 \\ 0 & 0 & 1 \end{pmatrix} \ r \in \mathbb{R}_{>0}, \ W := \begin{pmatrix} 1 & 0 & 0 \\ 0 & 0 & 1 \\ 0 & 1 & 0 \end{pmatrix}
\end{align*}
\par Using Mathematica, we compute how the normal forms change under application of these matrices. The calculations are included in the Appendix, and we record them here. Identifying the normal form above with the matrix
\begin{align*}
    \begin{pmatrix}
    a_1 & b_1 & c_1 \\
    a_2 & b_2 & c_2
    \end{pmatrix},
\end{align*}
we compute
\begin{align*}
    I_\lambda^* \begin{pmatrix}
    a_1 & b_1 & c_1 \\
    a_2 & b_2 & c_2
    \end{pmatrix} & = \begin{pmatrix}
    a_1 + Im(\lambda) & b_1 & c_1 \\
    a_2 & b_2 + Im(\lambda) & c_2
    \end{pmatrix} \\
    S_r^* \begin{pmatrix}
    a_1 & b_1 & c_1 \\
    a_2 & b_2 & c_2
    \end{pmatrix} & = \begin{pmatrix}
    a_1r & b_1 & \frac{c_1}{r} \\
    a_2 & b_2r & c_2r^2
    \end{pmatrix} \\
    W^*\begin{pmatrix}
    a_1 & b_1 & c_1 \\
    a_2 & b_2 & c_2
    \end{pmatrix} & = \begin{pmatrix}
    a_2 & b_2 & c_2 \\
    a_1 & b_1 & c_1
    \end{pmatrix}
\end{align*}
From these transformation laws, it follows that there is a unique (up to $W$) set of projective coordinates such that $a_1 = a_2 = 0$ and $c_2 = c_1 = -1$. Now suppose every weak tangent, which are given by $\Hom{w} = [0:t:1-t], t \in [0,1]$, avoids $\Omega$ in a uniform neighborhood of $\Hom{z}$. In particular, each weak tangent, given by the equation $tz_1 + (1-t)z_2 = 0$ must avoid $\sigma$ in a neighborhood of 0. Fixing $t$, and setting $s = 1-t$, this implies that whenever $x_2 = \frac{-t}{s}x_2$, we must have 
\begin{align*}
    0 \neq ty_1 + sy_2
\end{align*}
We will show that the above expression, expanded in terms of $x_1$, has lowest order term of degree 2, and that the coefficient is negative as $t \rightarrow 0$. Thus to be non-zero, the coefficient must be non-positive for all $t$. We now go through the calculations.
\par Expanding the right hand side in terms of $x_1$,
\begin{align*}
    ty_1 + sy_2 = (-t^2s^{-1}b_1-t^3s^{-2}-tb_2-s)x_1^2 + o(|x_1|^3)
\end{align*}
Note that the coefficients in the higher order terms are bounded as $t \rightarrow 0$.
\par Multiplying by $\frac{s^2}{x_1^2}$, we see that this is equivalent to
\[ 0 \neq -t^3 - s^3 -t^2sb_1 - ts^2b_2, \ t \in [0,1] \]
Rearranging slightly, this is equivalent to 
\[ -tb_1-sb_2 < \frac{t^3 + s^3}{st} \]
The function $f(t) = \frac{t^3 + s^3}{st}$ is defined on $(0,1)$ and easily verified to be convex, while the left hand side is a linear function who's endpoints have height $-b_2, -b_1$. To make the situation symmetric about the origin, introduce the change of variables $\hat{t} = 2t-1$, $\hat{s} = -\hat{t} = 2s-1$, and we obtain
\begin{align}
\label{eq:Legendre_inequality}
-\frac{\hat{t}+1}{2}b_1 - \frac{\hat{s}+1}{2}b_2 < \frac{(\frac{\hat{t}+1}{2})^3 + (\frac{\hat{s}+1}{2})^3}{(\frac{\hat{t}+1}{2})( \frac{\hat{s}+1}{2})}
\end{align}

\begin{center}
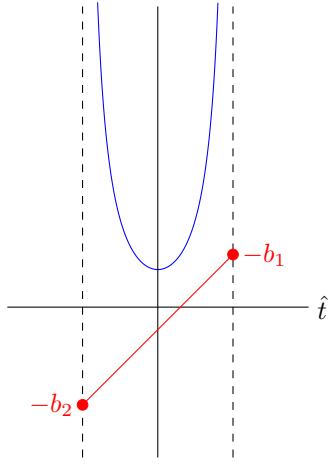
\begin{figure}
\begin{tikzpicture}
\filldraw (2, 0) circle (0pt) node[anchor = west]{$\hat{t}$};
\draw[smooth,samples=100,domain=-.8:.8,blue] plot(\x,{((\x+1)^3 + (1-\x)^3)/(4*(1+\x)*(1-\x))});
\draw[dashed] (-1, -2) -- (-1, 4);
\draw[dashed] (1, -2) -- (1, 4);
\draw (-2,0) -- (2,0);
\draw (0,-2) -- (0,4);
\filldraw[red] (-1,-1.3) circle (2pt) node[anchor=east]{$-b_2$};
\filldraw[red] (1,.7) circle (2pt) node[anchor=west]{$-b_1$};
\draw[red] (-1,-1.3) -- (1, .7);

\end{tikzpicture}
\caption{Visualization of inequality describing weak tangents locally avoiding $\sigma$. The graph of $f$ is in blue, the left hand side of the inequality is represented by the red line.}
\label{fig:Inequality}
\end{figure}
\end{center}

We see that the inequality holds for all $\hat{t} \in [-1, 1]$ exactly when the Legendre transform of $f$ in the $\hat{t}$ variable, denoted $L\{f\}$, satisfies
\[ L\{f\}(\frac{-b_1+b_2}{2}) \leq \frac{b_1 + b_2}{2} \]
See Fig. \ref{fig:Inequality} to make this apparent. It follows from the symmetry about the origin that this quantity is invariant with respect to exchanging $b_1$ and $b_2$. Therefore, the quantity 
\[ \kappa = \frac{b_1+b_2}{2} - L\{f\}(\frac{-b_1+b_2}{2}) \]
is projectively invariant. It follows from the comment that the higher order terms are bounded as $t \rightarrow 0$ that this quantity is non-negative whenever $\Omega$ is locally strictly $\C$-convex. A symmetric argument applies as $t \rightarrow 1$.
\par From the calculations above, we see that $\eta := c_1c_2\kappa |z_0|^3$ is the desired projectively invariant section of $O_S(3/2,3/2)$. It remains to show that when $\eta >0$, $\Omega$ is locally strictly $\C$-convex. Suppose this is the case, and let $z$ be a point near $0$ contained in the weak tangent $(t, s) \cdot z = 0$. Suppose for contradiction that $z \in \overline{\Omega}$. From the proof of Prop. \ref{prop:weaktangents}, there are constants $C_j$ so that $|y_j| \leq C_j||(x_1, x_2)||^2$. Plugging in these bounds to eq. (\ref{eq:H1}) and (\ref{eq:H2}), and assuming we are in coordinates where $a_1 = a_2 = 0, c_1 = c_2 = -1$, we have

\begin{align*}
    y_1 \leq b_1x_1x_2-x_2^2 + h.o.t. \\
    y_2 \leq b_2x_1x_2-x_1^2 + h.o.t.
\end{align*}
Since $\eta > 0$, we have $ty_1 + sy_2 < 0$, which is the desired contradiction.
\end{proof}

\begin{rem}
\par The choice to Legendre transform is somewhat arbitrary; we are in a situtation where the choice of coordinates is unique up to permutation, and so there are many different ways to characterize inequality (\ref{eq:Legendre_inequality}) being satisfied for all $\hat{t}$. The Legendre transformation was chosen simply because of the similarity between the Legendre transform from real convexity theory and the dual map in complex variables (see Remark 18 of \cite{barrett_2015}).
\end{rem}

\begin{ex}{Perturbation of the Bidisk}
\par For the bidisk $\D^2$, there is one 2-dimensional edge, namely the distinguished boundary $b\D^2 = S^1 \times S^1$. Taking coordinates centered at $(1,1)$, the expansion along $b\D^2$ up to order 2 is
\begin{align*}
    y_1 & = -\frac{1}{2}x_1^2 \\
    y_2 & = -\frac{1}{2}x_2^2 \\
\end{align*}
\par $\D^2$ fails to be strongly $\C$-convex along the smooth portion of the boundary, but one can take a strongly $\C$-convex approximation via
\[ \Omega_{\epsilon} := \{|z_1|^2 + \epsilon|z_2|^2 < 1, |z_2|^2 + \epsilon|z_1|^2 < 1\}, \ 0 < \epsilon << 1 \]
It follows that the normal from along the 2-dimensional edge of $\Omega$ will have $a_j < 0$, and $b_j$ small, and we can conclude that $\eta > 0$.
\end{ex}
\par An interesting and related question is the following; Let $S \subset \Ps^n$ be a totally real $n$-manifold. What are the local projective invariants of $S$? This question has been studied for hypersurfaces for holomorphic and projective changes of coordinates, see \cite{Chern_Moser_1974}, \cite{Bolt_2008}, and \cite{Hammond_Robles_2013}.
\par When $n=2$, one can make a change of coordinates so that $z = 0$, $T_0(S) = \{y_1 = y_2 = 0\}$, which specifies the coordinates up to $I_\lambda, \lambda \in \C, S_r, r \in \mathbb{R}^*, W$ and the matrix
\begin{align*}
    H_r := \begin{pmatrix}
    1 & 0 & 0 \\
    0 & 1 & r \\
    0 & 0 & 1
    \end{pmatrix}, \ r \in \mathbb{R}
\end{align*}
We compute the pullback of the normal form in the Appendix, and record it here.
\begin{align*}
   H_r^* \begin{pmatrix}
    a_1 & b_1 & c_1 \\
    a_2 & b_2 & c_2
    \end{pmatrix} =
    \left(
    \begin{matrix}
    a_1 +rb_1+r^2c_1 & b_1+2rc_1 \\
    a_2-rc_1 & b_2+2ra_2-rb_1+2r^2c_1 \\
    \end{matrix} \right. \\
    \left. \begin{matrix}
    c_1 \\
    c_2+r(b_2-a_1)+r^2(a_2+b_1)+r^3c_1
    \end{matrix} \right)
\end{align*}

\par Keeping Prop. \ref{prop:InvariantSection} in mind, we say that a piece-wise smooth, pseudoconvex domain $\Omega \subset \Ps^2$ is \emph{strongly $\C$-convex} if $\Omega$ is strictly $\C$ convex, each boundary hypersurface is strongly $\C$-convex, and $\eta > 0$ on each 2-dimensional edge.
\par Let $\tilde{\mu}$ be an positive, projectively invariant $O_S(-2,-2)$-valued 3-form defined on the 3-dimension edges of $S$, such as the Fefferman form or Barrett's preferred measure. Let $\mu' := \eta^{\frac{1}{3}} |z_0^3dz|$, and set $\mu = \tilde{\mu} + \mu'$. Define the norm of a section $f$ of $O_{S}(-n,0)$ to be 
\[ ||f||^2 = \int_S |f|^2 d\mu. \]
By construction, this norm is projectively invariant. Let $L^2(\mu)$ be the corresponding completion, and define the Hardy space
\[ H^2(\Omega, \mu) := \overline{\{f|_S | f \in \sO_{\overline{\Omega}}(-n,0)\}} \]
The following theorem, which follows directly from strict positivity of $\eta$ and Thm \ref{thm:LerayKernel}, justifies calling this the Hardy space.
\begin{thm}
Let $\Omega \subset \Ps^2$ be piece-wise smooth, strongly $\C$-convex domain. The restriction $\sO_{\overline{\Omega}}(-2,0) \rightarrow H^2(\Omega, \mu)$ is injective, with inverse provided by integrating against the kernel $\Leray$.
\end{thm}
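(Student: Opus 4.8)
The plan is to deduce both assertions directly from the reproducing identity of Thm.~\ref{thm:LerayKernel} together with the strict positivity of $\eta$, whose sole role is to guarantee that the measure $\mu$ and the kernel $\Leray$ are supported on the same stratified set. First I would fix notation: let $\Sigma \subset S$ be the union of the smooth (three-dimensional) part of $S$ with the two-dimensional edges. By Prop.~\ref{prop:LerayCalc} the kernel $\Leray$ vanishes identically on edges of dimension $< n = 2$, so the support of $\Leray$ is exactly $\Sigma$. On the other hand $\tilde{\mu}$ is positive on the smooth part, and $\mu' = \eta^{1/3}|z_0^3dz|$ is positive on each two-dimensional edge precisely because $\eta > 0$ there (this is the content of strong $\C$-convexity in the sense defined above); hence $\mu = \tilde{\mu} + \mu'$ has full support on $\Sigma$.

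For injectivity, suppose $f \in \sO_{\overline{\Omega}}(-2,0)$ restricts to $0$ in $H^2(\Omega, \mu)$, i.e. $\int_S |f|^2 \, d\mu = 0$. Then $f = 0$ $\mu$-almost everywhere, and since $f$ is continuous and $\mu$ has full support on $\Sigma$, in fact $f$ vanishes pointwise on all of $\Sigma$. Because $\Leray$ is supported on $\Sigma$, the reproducing formula of Thm.~\ref{thm:LerayKernel} gives
\[ f(\boldsymbol{\tau}) = \int_S f(\Hom{z}) \, \Leray = \int_\Sigma f(\Hom{z}) \, \Leray = 0 \]
for every $\boldsymbol{\tau} \in \Omega$, so $f$ vanishes on $\Omega$ and hence identically by the identity theorem. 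This shows the restriction map is injective.

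For the inverse, Thm.~\ref{thm:LerayKernel} says exactly that for $f \in \sO_{\overline{\Omega}}(-2,0)$ one recovers $f(\boldsymbol{\tau}) = \int_S (f|_S)\,\Leray$, so integration against $\Leray$ is a left inverse of the restriction map on its dense image. To see that this integral is a well-defined continuous operation on $H^2(\Omega, \mu)$, I would check that for each fixed $\boldsymbol{\tau} \in \Omega$ the functional $g \mapsto \int_S g\,\Leray$ is bounded on $L^2(\mu)$: writing $\Leray = \phi_{\boldsymbol{\tau}}\, d\mu$ on $\Sigma$, the estimate is Cauchy--Schwarz provided $\phi_{\boldsymbol{\tau}} \in L^2(\mu)$. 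On the smooth part both $\Leray$ and $\tilde{\mu}$ are smooth three-forms with $\tilde{\mu}>0$, and on each two-dimensional edge both $\Leray$ and $\mu'$ are smooth two-forms with $\mu' > 0$; moreover $\langle \boldsymbol{\tau}, \Hom{w}\rangle$ is bounded away from $0$ for $\Hom{w} \in \weak$ and $\boldsymbol{\tau}$ in a compact subset of $\Omega$ (strict $\C$-convexity forces the weak tangents to avoid $\Omega$), so $\phi_{\boldsymbol{\tau}}$ is bounded on $\Sigma$, locally uniformly in $\boldsymbol{\tau}$. Consequently the functional extends continuously to $H^2(\Omega, \mu)$, and approximating $g$ by restrictions $f_n|_S$ produces holomorphic functions $f_n$ converging locally uniformly on $\Omega$, so the Leray integral of $g$ is holomorphic and inverts the restriction map on the image.

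The main obstacle is precisely the step where $\eta > 0$ enters: one must ensure that $\mu$ dominates $\Leray$ on the two-dimensional edges. If $\eta$ were permitted to vanish at a point of such an edge, then $\mu'$ would vanish there while $\Leray$ does not, the density $\phi_{\boldsymbol{\tau}}$ would blow up, and neither the pointwise vanishing of $f$ on $\Sigma$ (needed for injectivity) nor the $L^2$-boundedness of the Leray integral (needed for the inverse) could be concluded. Everything else is bundle-bookkeeping, namely checking that $|f|^2\,d\mu$ and $f\,\Leray$ are genuine scalar top-degree forms on each stratum, which is guaranteed by the $O_S(\cdot,\cdot)$-weights recorded earlier.
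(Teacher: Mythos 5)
Your proposal is correct and takes essentially the same approach as the paper, which proves the theorem with only the remark that it ``follows directly from strict positivity of $\eta$ and Thm \ref{thm:LerayKernel}'': positivity of $\mu$ on the union of the $2$- and $3$-dimensional edges (exactly where $\Leray$ is supported, by Prop.~\ref{prop:LerayCalc}) gives injectivity, and the reproducing formula gives the inverse. Your additional verification that the Leray functional is $L^2(\mu)$-bounded, so that the inverse extends to the closure $H^2(\Omega,\mu)$, is a useful elaboration beyond what the paper records, not a deviation from its method.
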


\thanks{The author would like to thank David Barrett for his mentorship and guidance while preparing this work.}


%

\bibliographystyle{amsalpha}
\bibliography{main}

\providecommand{\bysame}{\leavevmode\hbox to3em{\hrulefill}\thinspace}
\providecommand{\MR}{\relax\ifhmode\unskip\space\fi MR }
\providecommand{\MRhref}[2]{%
  \href{http://www.ams.org/mathscinet-getitem?mr=#1}{#2}
}
\providecommand{\href}[2]{#2}
\begin{thebibliography}{CMM82}

\bibitem[APS04]{APS}
M.~Andersson, M.~Passare, and R.~Sigurdsson, \emph{Complex convexity and
  analytic functionals}, Birkh\"{a}user, 2004.

\bibitem[AY83]{AY}
L.~Aizenberg and A.~Yuzhakov, \emph{Integral representations and residues in
  multidimensional complex analysis}, American Mathematical Society, 1983,
  Translations of Mathematical Monographs, v. 58.

\bibitem[Bar15]{barrett_2015}
D.~Barrett, \emph{Holomorphic projection and duality for domains in complex
  projective space}, Transactions of the American Mathematical Society
  \textbf{368} (2015), no.~2, 827–850.

\bibitem[Bel92]{Bell_1992}
S.~Bell, \emph{The {C}auchy transform, potential theory, and conformal
  mapping}, Studies in advanced mathematics, CRC Press, Boca Raton, Fl, 1992.

\bibitem[Bol08]{Bolt_2008}
M.~Bolt, \emph{The {M}öbius geometry of hypersurfaces}, Michigan Mathematical
  Journal \textbf{56} (2008), no.~3, 603--622.

\bibitem[BT82]{Bott_Tu_1982}
R~Bott and L.~W. Tu, \emph{Differential forms in algebraic topology}, Graduate
  Texts in Mathematics, vol.~82, Springer New York, New York, NY, 1982.

\bibitem[Cal77]{Calderon_1977}
A.~Calderón, \emph{{C}auchy integrals on {L}ipschitz curves and related
  operators}, Proceedings of the National Academy of Sciences \textbf{74}
  (1977), no.~4, 1324–1327 (en).

\bibitem[CM74]{Chern_Moser_1974}
S.~S. Chern and J.~K. Moser, \emph{Real hypersurfaces in complex manifolds},
  Acta Mathematica \textbf{133} (1974), no.~0, 219–271 (en).

\bibitem[CMM82]{Coifman_McIntosh_Meyer_1982}
R.~Coifman, A.~McIntosh, and Y.~Meyer, \emph{L’int{\'e}grale de {C}auchy
  d{\'e}finit un operateur born{'e} sur ${L}^2$ pour les courbes
  {L}ipschitziennes}, The Annals of Mathematics \textbf{116} (1982), no.~2,
  361.

\bibitem[Dur70]{Duren_1970}
P.~L. Duren, \emph{Theory of ${H}^p$ spaces, 38.}, Elsevier, Burlington, 1970
  (English).

\bibitem[Fef79]{Fefferman_1979}
C.~Fefferman, \emph{Parabolic invariant theory in complex analysis}, Advances
  in Mathematics \textbf{31} (1979), no.~2, 131–262 (en).

\bibitem[HR13]{Hammond_Robles_2013}
C.~Hammond and C.~Robles, \emph{Projective invariants of cr-hypersurfaces},
  Complex Variables and Elliptic Equations \textbf{58} (2013), no.~11,
  1493–1516 (en).

\bibitem[Kra08]{krantz_2008}
S.~G. Krantz, \emph{Function theory of several complex variables}, AMS Chelsea
  Publ., 2008.

\bibitem[LS13]{Lanzani_Stein_2013}
L.~Lanzani and E.~Stein, \emph{Cauchy-type integrals in several complex
  variables}, Bulletin of Mathematical Sciences \textbf{3} (2013), no.~2,
  241–285 (en).

\end{thebibliography}

\setcounter{secnumdepth}{0}
\appendix
\section{Appendix: Projective Normal Form Calculations}
\label{appendix}
\begin{doublespace}
	\noindent\(\pmb{\text{PullBack}[\text{expr1$\_$}, \text{expr2$\_$}, \text{vars$\_$},\text{M$\_$}] \text{:=} \text{Module}[\{\text{z1}, \text{z2},
		\text{w1}, \text{w2},\text{x1},\text{y1},\text{x2},\text{y2},t,} \\
	\indent \pmb{\text{newexpr1},\text{newexpr2}\},}\\
	\indent \pmb{\text{z1} = \text{x1} + I \text{y1};}\\
	\indent \pmb{\text{z2} = \text{x2} + I \text{y2};}\\
	\indent \pmb{\text{w1} = (M[[1,2]]+M[[2,2]]*\text{z1}+M[[3,2]]*\text{z2})/(M[[1,1]]+M[[2,1]]*\text{z1}}\\
	\indent \indent \pmb{+M[[3,1]]*\text{z2});}\\
	\indent \pmb{\text{w2} = (M[[1,3]]+M[[2,3]]*\text{z1}+M[[3,3]]*\text{z2})/(M[[1,1]]+M[[2,1]]*\text{z1}}\\ \indent \indent \pmb{+M[[3,1]]*\text{z2});}\\
	\indent \pmb{\text{(*}\text{Substituting} \text{ in} \text{ Real}, \text{ Imaginary} \text{ parts} \text{ of} \text{ transformed} \text{ variables} \text{ into} \text{ the}}\\
	\indent \pmb{\text{ normal} \text{ form}\text{*)}}\\
	\indent \pmb{\text{newexpr1} = \text{expr1}\text{/.}\text{Thread}[\text{vars}\text{-$>$}\{\text{ComplexExpand}[\text{Re}[\text{w1}]],
	}\\
    \indent \indent \pmb{\text{ComplexExpand}[\text{Re}[\text{w2}]], \text{ComplexExpand}[\text{Im}[\text{w1}]],}\\ \indent \indent \pmb{\text{ComplexExpand}[\text{Im}[\text{w2}]]\}];}\\
	\indent \pmb{\text{newexpr2} =\text{expr2}\text{/.}\text{Thread}[\text{vars}\text{-$>$}\{\text{ComplexExpand}[\text{Re}[\text{w1}]],}\\ \indent \indent \pmb{\text{ComplexExpand}[\text{Re}[\text{w2}]],\text{ComplexExpand}[\text{Im}[\text{w1}]],}\\
	\indent \indent \pmb{\text{ComplexExpand}[\text{Im}[\text{w2}]]\}];}\\
	\pmb{}\\
	\indent \pmb{\text{(*}\text{Discarding} \text{ all} \text{ terms} \text{ of} \text{ degree} >2, \text{ recall} \text{ y1}, \text{ y2} \text{ have}
		\text{ degree } 2\text{*)}}\\
	\indent \pmb{\text{newexpr1} = \text{Normal}[\text{Series}[\text{newexpr1}\text{/.} \text{Thread}[\{\text{y1},\text{y2},\text{x1},\text{x2}\}\text{-$>$}\{t{}^{\wedge}2*\text{y1},}\\
	\indent \indent \pmb{t{}^{\wedge}2*\text{y2}, t*\text{x1},t*\text{x2}\}],\{t,0,2\}]]\text{/.} t\text{-$>$}1; }\\
	\indent \pmb{\text{newexpr2} =\text{Normal}[\text{Series}[\text{newexpr2}\text{/.} \text{Thread}[\{\text{y1},\text{y2},\text{x1},\text{x2}\}\text{-$>$}\{t{}^{\wedge}2*\text{y1},}\\
	\indent \indent \pmb{t{}^{\wedge}2*\text{y2}, t*\text{x1},t*\text{x2}\}],\{t,0,2\}]]\text{/.} t\text{-$>$}1;}\\
	\pmb{}\\
	\indent\pmb{\text{(*Substituting in original variables*)}}\\
	\indent \pmb{\{\text{Collect}[\text{newexpr1}\text{/.}\text{Thread}[\{\text{x1},\text{x2},\text{y1},\text{y2}\}\text{-$>$}\text{vars}],\text{vars}], \text{Collect}[\text{newexpr2}}\\
	\indent \indent \pmb{\text{/.}\text{Thread}[\{\text{x1},\text{x2},\text{y1},\text{y2}\}\text{-$>$}\text{vars}],\text{vars}]\}}\\
	\pmb{]}\)
\end{doublespace}

\pmb{\text{(* Pulling back by $I_\lambda, \lambda \in \mathbb{R}$}*)}

\begin{doublespace}
	\noindent\(\pmb{\text{PullBack}[\text{a1}*\text{s1}{}^{\wedge}2+\text{b1}*\text{s1}*\text{s2}+\text{c1}*\text{s2}{}^{\wedge}2-\text{t1},\text{a2}*\text{s2}{}^{\wedge}2+\text{b2}*\text{s1}*\text{s2}+}\\
	\indent \pmb{\text{c2}*\text{s1}{}^{\wedge}2-\text{t2},\{\text{s1},\text{s2},\text{t1},\text{t2}\},\{\{1,0,0\},\{\lambda ,1,0\},\{0,0,1\}\}]}\)
\end{doublespace}

\begin{doublespace}
	\noindent\(\left\{\text{a1} \text{s1}^2+\text{b1} \text{s1} \text{s2}+\text{c1} \text{s2}^2-\text{t1},\text{c2} \text{s1}^2+\text{b2} \text{s1} \text{s2}+\text{a2}
	\text{s2}^2-\text{t2}\right\}\)
\end{doublespace}

\pmb{\text{(* Pulling back by $I_\lambda, \lambda \in i\mathbb{R}$}*)}

\begin{doublespace}
	\noindent\(\pmb{\text{PullBack}[\text{a1}*\text{s1}{}^{\wedge}2+\text{b1}*\text{s1}*\text{s2}+\text{c1}*\text{s2}{}^{\wedge}2-\text{t1},\text{a2}*\text{s2}{}^{\wedge}2+\text{b2}*\text{s1}*\text{s2}+}\\
	\indent \pmb{\text{c2}*\text{s1}{}^{\wedge}2-\text{t2},\{\text{s1},\text{s2},\text{t1},\text{t2}\},\{\{1,0,0\},\{I*\lambda ,1,0\},\{0,0,1\}\}]}\)
\end{doublespace}

\begin{doublespace}
	\noindent\(\left\{\text{b1} \text{s1} \text{s2}+\text{c1} \text{s2}^2-\text{t1}+\text{s1}^2 (\text{a1}+\lambda ),\text{c2} \text{s1}^2+\text{a2}
	\text{s2}^2-\text{t2}+\text{s1} \text{s2} (\text{b2}+\lambda )\right\}\)
\end{doublespace}

\pmb{\text{(* Pulling back by $S_r$}*)}
\(\)

\begin{doublespace}
	\noindent\(\pmb{\text{PullBack}[\text{a1}*\text{s1}{}^{\wedge}2+\text{b1}*\text{s1}*\text{s2}+\text{c1}*\text{s2}{}^{\wedge}2-\text{t1},\text{a2}*\text{s2}{}^{\wedge}2+\text{b2}*\text{s1}*\text{s2}+}\\
	\indent \pmb{\text{c2}*\text{s1}{}^{\wedge}2-\text{t2},\{\text{s1},\text{s2},\text{t1},\text{t2}\},\{\{1,0,0\},\{0,r,0\},\{0,0,1\}\}]}\)
\end{doublespace}

\begin{doublespace}
	\noindent\(\left\{\text{a1} r^2 \text{s1}^2+\text{b1} r \text{s1} \text{s2}+\text{c1} \text{s2}^2-r \text{t1},\text{c2} r^2 \text{s1}^2+\text{b2}
	r \text{s1} \text{s2}+\text{a2} \text{s2}^2-\text{t2}\right\}\)
\end{doublespace}

\pmb{\text{(* Pulling back by $H_r$}*)}

\begin{doublespace}
	\noindent\(\pmb{\text{res} =\text{PullBack}[\text{a1}*\text{s1}{}^{\wedge}2+\text{b1}*\text{s1}*\text{s2}+\text{c1}*\text{s2}{}^{\wedge}2-\text{t1},\text{a2}*\text{s2}{}^{\wedge}2+}\\
	\indent \pmb{\text{b2}*\text{s1}*\text{s2}+\text{c2}*\text{s1}{}^{\wedge}2-\text{t2},\{\text{s1},\text{s2},\text{t1},\text{t2}\},\{\{1,0,0\},\{0,1,r\},\{0,0,1\}\}]}\)
\end{doublespace}

\begin{doublespace}
	\noindent\( \left(\text{a1}+\text{b1} r+\text{c1} r^2\right) \text{s1}^2+(\text{b1}+2 \text{c1} r) \text{s1} \text{s2}+\text{c1} \text{s2}^2-\text{t1},\left(\text{c2}+\text{b2}
	r+\text{a2} r^2\right) \text{s1}^2+\)
\end{doublespace}

\begin{doublespace}
\indent \((\text{b2}+2 \text{a2} r) \text{s1} \text{s2}+\text{a2} \text{s2}^2-r \text{t1}-\text{t2} \)
\end{doublespace}

\begin{doublespace}
	\noindent\(\pmb{\text{Collect}[\text{res}[[2]]-r*\text{res}[[1]], \{\text{s1},\text{s2},\text{t1},\text{t2}\}]}\)
\end{doublespace}

\begin{doublespace}
	\noindent\(\left(\text{c2}+\text{b2} r+\text{a2} r^2-r \left(\text{a1}+\text{b1} r+\text{c1} r^2\right)\right) \text{s1}^2+(\text{b2}+2 \text{a2}
	r-r (\text{b1}+2 \text{c1} r)) \text{s1} \text{s2}+(\text{a2}-\text{c1} r) \text{s2}^2-\text{t2}\)
\end{doublespace}

\begin{doublespace}
	\noindent\(\pmb{\text{}}\)
\end{doublespace}

\end{document}